\documentclass[11pt,fullpage, doublespace]{amsart}
\textwidth=7in
\textheight=9.5in
\topmargin=-0.5in
\headheight=0in
\headsep=.5in
\hoffset  -.85in

\usepackage{amsfonts, amssymb, amsmath}
\usepackage[all]{xy}
\usepackage{bm}
\usepackage{graphicx}

\newtheorem{theorem}{Theorem}[section]

\newtheorem*{theorem*}{Theorem 1.3'}

\newtheorem{lemma}{Lemma}[section]

\newtheorem{corr}{Corollary}[section]
\newtheorem{conj}{Conjecture}[section]

\newcommand{\Z}{\mathbb{Z}}

\newcommand{\nid}{\noindent}

\title{The average number of
divisors of the Euler function}
\author{Kim, Sungjin}



\begin{document}
    \maketitle
\begin{abstract}
The upper bound and the lower bound of average numbers of divisors of Euler Phi function and Carmichael Lambda function are obtained by Luca and Pomerance (see ~\cite{LP}). We improve the lower bound and provide a heuristic argument which suggests that the upper bound given by ~\cite{LP} is indeed close to the truth.
\end{abstract}
\section{Introduction}\footnote{Keywords: Euler, Carmichael, Number of Divisors, Average, AMS Subject Classification Code: 11A25}
Let $n\geq 1$ be an integer. Denote by $\phi(n)$, $\lambda(n)$, the Euler Phi function and the Carmichael Lambda function, which output the order and the exponent of the group $(\Z/n\Z)^{*}$ respectively. We use $p(\mathrm{or} \ p_i)$, $q(\mathrm{or } \ q_i)$ to denote the prime divisors of $n$ and $\phi(n)$ respectively. Then it is clear that $\lambda(n)|\phi(n)$ and the set of prime divisors $q$ of $\phi(n)$ and that of $\lambda(n)$ are identical. Let $n=p_1^{e_1}\cdots p_r^{e_r}$ be a prime factorization of $n$. Then we can compute $\phi(n)$ and $\lambda(n)$ as follows:
$$
\phi(n)=\prod_{i=1}^r \phi(p_i^{e_i}), \  \mathrm{and} \ \lambda(n)=\mathrm{lcm}\left(\lambda(p_1^{e_1}), \ldots, \lambda(p_r^{e_r})\right)
$$
where $\phi(p_i^{e_i})=p_i^{e_i-1}(p_i-1)$ and $\lambda(p_i^{e_i})=\phi(p_i^{e_i})$ if $p_i>2$ or $p_i=2$ and $e_i=1, 2$, and $\lambda(2^e)=2^{e-2}$ if $e\geq 3$.

From the work of Hardy and Ramanujan ~\cite{HR}, it is well known that the normal order of $\tau(n)$ is $(\log n)^{\log 2 + o(1)}$. On the other hand, the average order $\frac1x\sum\limits_{n\leq x} \tau(n)$ is known to be $\log x + O(1)$ which is somewhat larger than the normal order. For $\tau(\lambda(n))$ and $\tau(\phi(n))$, the normal orders of these follows from ~\cite{EP} that they are $2^{\left(\frac12 +o(1)\right)(\log\log n)^2}$. On the contrary, the work of Luca and Pomerance ~\cite{LP} showed that their average order is significantly larger than the normal order. Define $F(x) = \exp\left( \sqrt{\frac{\log x}{\log\log x}}\right)$. In ~\cite[Theorem 1,2]{LP}, they proved that
$$F(x)^{b_1+o(1)} \le \frac 1x\sum_{n\le x}\tau(\lambda(n))
\le \frac1x\sum_{n\le x}\tau(\phi(n)) \le F(x)^{b_2+o(1)}$$
as $x\to\infty$, where
$b_1 = \frac17 e^{-\gamma/2}$ and $b_2 = 2\sqrt 2 e^{-\gamma/2}$.

In this paper we are able to raise the constant $b_1$ so that
it is almost $b_2$, differing only by a factor $\sqrt{2}$. Here, we take advantage of the inequalities of Bombieri-Vinogradov type regarding primes in arithmetic progression (see ~\cite[Theorem 9]{BFI}, also ~\cite[Theorem 2.1]{F}). In this paper, we apply the following version which can be obtained from ~\cite[Theorem 2.1]{F}:
For $(a,n)=1$, we write $E(x;n,a):=\pi(x;n,a)-\frac{\pi(x)}{\phi(n)}$. Let $0<\lambda<1/10$. Let $R\leq x^{\lambda}$. For some $B=B(A)>0$, $M=\log^B x$, and $Q=x/M$,
$$
\sum_{\substack{{r\leq R}\\{(r,a)=1}}}\left|\sum_{\substack{{q\leq \frac Q{ r}}\\{(q,a)=1}}}E(x;qr,a) \right|\ll_{A,\lambda}   x \log^{-A} x.
$$
In fact, ~\cite[Theorem 2.1]{F} builds on ~\cite[Theorem 9]{BFI} and obtains a more accurate estimate, but we only need the above form for our purpose. Note that one of the important differences between ~\cite[Theorem 9]{BFI} and ~\cite[Theorem 2.1]{F} is the presence of $\frac Qr$ in the inner sum. This will be essential in the proof of our lemmas (see Lemma 2.2 and 2.3).

It is interesting to note that one of these improvements is related to a Poisson distribution that we can obtain from prime numbers. Another point of improvement comes from the idea in the proof of Gauss' Circle Problem.
\begin{theorem}
As $x\rightarrow \infty$, we have
$$
\sum_{n\leq x}\tau(\phi(n))\geq \sum_{n\leq x} \tau(\lambda(n))\geq x\exp\left(2 e^{-\frac{\gamma}2}\sqrt{\frac{\log x}{\log\log x}}(1+o(1))\right).
$$
\end{theorem}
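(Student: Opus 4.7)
The plan is to exploit the identity
$$\sum_{n\le x}\tau(\lambda(n))\;=\;\sum_{d\ge 1}N_d(x),\qquad N_d(x):=\#\{n\le x:d\mid\lambda(n)\},$$
and to lower-bound the right-hand side by restricting $d$ to a well-chosen family of squarefree integers. I would fix a cut-off $y=y(x)$, to be pinned down by optimization, and use the inequality $\tau(\lambda(n))\ge 2^{\omega_y(\lambda(n))}$, whose expansion yields $\sum_{n\le x}2^{\omega_y(\lambda(n))}=\sum_{d\in\mathcal D_y}N_d(x)$ with $\mathcal D_y$ the set of squarefree integers supported on primes $q\le y$. A sharper starting inequality, inspired by the involution $d\mapsto \lambda(n)/d$ used in the Dirichlet hyperbola method and Gauss' circle problem, is to double-count every small divisor $d$ of $\lambda(n)$; this is the first of the two ingredients advertised in the introduction.

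For a squarefree $d=q_1\cdots q_k\in\mathcal D_y$, the condition $d\mid\lambda(n)$ is equivalent to the existence, for each $i$, of a prime $p_i\mid n$ with $q_i\mid p_i-1$. I would lower-bound $N_d(x)$ by counting $n\le x$ of the shape $n=m\,p_1\cdots p_k$, with the $p_i$'s distinct primes drawn from a common interval $(P,2P]$ satisfying $q_i\mid p_i-1$ and $m$ coprime to $\prod p_i$. Detecting coprimality and distinctness by M\"obius inversion introduces auxiliary moduli $r$ next to the $q_i$'s, and the key feature of the Bombieri--Vinogradov-type bound stated in the introduction---namely, the appearance of $Q/r$, rather than $Q$, in the inner sum---is exactly what allows one to absorb these $r$'s without degrading the range of moduli on which $\pi(z;q_i r,1)\sim\pi(z)/\phi(q_i r)$ holds on average. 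This is what lets us take $k$ as large as $\asymp\sqrt{\log x/\log\log x}$.

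Because the $p_i$'s lie in a short interval and are tied to distinct small moduli $q_i$, the events $\{q_i\mid p_i-1\}$ essentially decouple and behave like independent Bernoulli trials with parameters $\sim\log\log x/(q_i-1)$. This Poisson-type independence---the second ingredient advertised in the introduction---together with Mertens' theorem $\prod_{q\le y}(1-1/q)^{-1}\sim e^{\gamma}\log y$, produces the $e^{-\gamma/2}$ factor in the final constant. Combining the Poisson mass with the Gauss-circle doubling, and choosing $y$ and $k$ so as to saturate the constraint $\prod p_i\le x$, yields the target exponent $2e^{-\gamma/2}\sqrt{\log x/\log\log x}$, leaving a gap of $\sqrt{2}$ to the upper-bound constant $b_2=2\sqrt{2}\,e^{-\gamma/2}$.

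The main obstacle, I expect, is uniform error control. The family $\mathcal D_y$ has cardinality roughly $2^{\pi(y)}$, so even a polylogarithmic slippage in $N_d(x)$ would be amplified past the main term; only the sharp Bombieri--Vinogradov-type bound with the $Q/r$ inner sum is strong enough to survive this amplification, and I expect the two lemmas alluded to in the introduction to isolate exactly the shape of estimate needed. A secondary combinatorial difficulty is avoiding over-counting in the lower bound for $N_d(x)$: the decomposition $n=m\prod p_i$ is not unique, so preserving the gains from the hyperbola doubling requires careful bookkeeping, probably via a Brun-type sieve or an explicit hyperbola split on the variable $d$ itself.
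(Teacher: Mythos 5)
Your strategy shares two ingredients with the paper's proof --- building $n$ out of roughly $v\sim\sqrt{\log x/\log\log x}$ primes, and relying on the $Q/r$-shape Bombieri--Vinogradov type estimate to control moduli --- but there are genuine gaps that would prevent the argument from closing as written.

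First, you have misread the role of Gauss' circle problem. It has nothing to do with the divisor-pairing involution $d\mapsto\lambda(n)/d$ or the Dirichlet hyperbola. In the paper it is the classical lattice-point heuristic applied to the simplex $\mathbb{T}_v=\{(t_1,\ldots,t_v): t_i\ge 0,\ t_1+\cdots+t_v\le 1\}$ of exponent vectors $(\log p_1/\log x,\ldots,\log p_v/\log x)$. The paper covers $\mathbb{T}_v$ by $v$-cubes of side $r=(v^{3/2}\log v)^{-1}$ and keeps only the cubes lying entirely inside; the Gauss-circle observation is that the discarded boundary layer has negligible relative volume, so the union of retained cubes still captures the full simplex volume $1/v!$. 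This covering is what lets one multiply the single-prime estimate from Corollary 2.2 across the $v$ coordinates and land on $\mathfrak{S}_{v,z}(x)\sim\frac{1}{v!}S_z(x)^v$.

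Second, and as a consequence, confining all $p_i$ to a common interval $(P,2P]$ is a real loss, not a bookkeeping convenience. That choice corresponds to a single cube near the centroid of $\mathbb{T}_v$ and contributes roughly $r^v S_z(x)^v$ to the multiple sum; the whole simplex contributes $\frac{1}{v!}S_z(x)^v$, larger by a factor of order $\frac{1}{v!r^v}$, which after Stirling and the optimization over $v$ changes the exponent. Your version would not recover $2e^{-\gamma/2}$. To fix it you must let the $p_i$ range over the full simplex (or, equivalently, let the intervals have varying dyadic scales filling $\{\sum\log p_i\le\log x\}$), exactly the paper's $\cup M_v$ construction.

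Third, you do not account for the $\sqrt{2}$ gap that you announce. In the paper the loss is structural: for $n=p_1\cdots p_v$, one has $\tau_z''(n)=\prod_i\tau_z(p_i-1)\ge\tau_z(\lambda(n))$, with equality only if the factors $p_i-1$ share no prime above $z$. Collisions in the band $(z,z^2]$ are hard to exclude, so the paper retreats to $\tau_{z^2}(\lambda(n))=\tau_{z^2}''(n)$, which holds on $\mathcal{M}$ after forbidding collisions only above $z^2$. Replacing $\log z$ by $\log z^2=2\log z$ halves $S_z(x)\sim e^{-\gamma}\log x/\log z$, and the resulting optimization over $c$ (with $v\sim c\sqrt{\log x/\log\log x}$) gives $c=e^{-\gamma/2}$ and constant $2e^{-\gamma/2}$ rather than $c=\sqrt{2}e^{-\gamma/2}$ and $2\sqrt{2}e^{-\gamma/2}$. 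Your decoupled-Bernoulli heuristic glosses over exactly this obstruction; taken at face value it would overshoot to the conjectural constant, which the paper is explicit about not being able to prove.

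Finally, two smaller points. The Poisson distribution in the paper enters through Lemma 2.5 as a device to extend the admissible range of $z$ (by comparing $R_{z^a}$ to $R_z$ via the distribution of $w_{z,z^a}(p-1)$), not directly as independence of the events $q_i\mid p_i-1$. And the $e^{-\gamma}$ constant $c_1$ appears already in Lemma 2.2 as the density constant in $R_z(x)\sim c_1 x/\log z$; the half exponent $e^{-\gamma/2}$ is produced by the optimization of $v$, not directly by Mertens. Your reformulation via $\sum_d N_d(x)$ is a legitimate dual view of $\sum_n\tau(\lambda(n))$, but the paper never needs a lower bound for each $N_d$ individually: it constructs $n\in\mathcal{M}$ for which $\tau_{z^2}(\lambda(n))$ is exactly $\prod_{p\mid n}\tau_{z^2}(p-1)$ and appends a single additional prime to pass from weighted to unweighted sums, which also keeps the overcounting to at most $v+1$ decompositions --- a point that needs care in your $n=m\,p_1\cdots p_k$ decomposition with general $m$.
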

It is clear from $\lambda(n)|\phi(n)$ that $\sum_{n\leq x}\tau(\lambda(n))\leq \sum_{n\leq x}\tau(\phi(n))$. A natural question to ask is how large is the latter compared to the former. Luca and Pomerance proved in ~\cite[Theorem 2]{LP} that
$$
\frac1x \sum_{n\leq x}\tau(\lambda(n))=o\left( \max_{y\leq x} \frac1y \sum_{n\leq y}\tau(\phi(n)) \right).
$$
Moreover, they mentioned that a stronger statement
$$
\frac1x\sum_{n\leq x}\tau(\lambda(n)) = o \left(\frac1x\sum_{n\leq x}\tau(\phi(n))\right)$$
is probably true, but they did not have the proof.
Here, we prove that this statement is indeed true. As in the proof of  ~\cite[Theorem 2]{LP}, we take advantage of the fact that prime $2$ appears rarely in the factorization of $\lambda(n)$ than in the factorization of $\phi(n)$.
\begin{theorem}
As $x\rightarrow\infty$, we have
$$
\sum_{n\leq x}\tau(\lambda(n)) = o \left(\sum_{n\leq x}\tau(\phi(n))\right).$$
\end{theorem}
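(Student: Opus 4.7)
The strategy, in line with \cite{LP}, exploits the disparity between the $2$-adic valuations of $\phi(n)$ and $\lambda(n)$. Writing $n=2^{e_0}p_1^{e_1}\cdots p_r^{e_r}$ with $p_i$ odd, one has
\[
v_2(\phi(n))=v_2(\phi(2^{e_0}))+\sum_{i=1}^{r}v_2(p_i-1),\qquad
v_2(\lambda(n))=\max\!\bigl(v_2(\lambda(2^{e_0})),\,\max_i v_2(p_i-1)\bigr),
\]
so for typical $n$ the former grows linearly in $\omega(n)$ while the latter only logarithmically. Since $\lambda(n)\mid\phi(n)$, the odd part of $\lambda(n)$ divides the odd part of $\phi(n)$; combined with $\tau(2^a m)=(a+1)\tau(m)$ for odd $m$, this yields the pointwise lower bound
\[
\tau(\phi(n))\ge R(n)\,\tau(\lambda(n)),\qquad R(n):=\frac{v_2(\phi(n))+1}{v_2(\lambda(n))+1}.
\]
Writing $T_\phi(x),T_\lambda(x)$ for the two divisor sums, it therefore suffices to show that for every fixed $K\ge 1$,
\[
\sum_{\substack{n\le x\\ R(n)<K}}\tau(\lambda(n))=o\bigl(T_\lambda(x)\bigr),
\]
because this forces $T_\phi(x)\ge K\,T_\lambda(x)(1-o(1))$ for every $K$, and hence $T_\lambda/T_\phi\to 0$.

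Fix $K$ and a parameter $s=s(x)\to\infty$ to be chosen. If $R(n)<K$ and $v_2(\lambda(n))\le s$, then $v_2(\phi(n))<K(s+1)$; since $v_2(\phi(n))\ge\omega(n)-1$, this forces $\omega(n)\le Ks+K$. Consequently
\[
\{\,n\le x:R(n)<K\,\}\;\subseteq\;\{\omega(n)\le Ks+K\}\;\cup\;\{v_2(\lambda(n))>s\},
\]
and the problem reduces to showing that each piece contributes $o(T_\lambda(x))$ to the $\tau(\lambda)$-weighted sum.

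For the first piece, a crude multiplicative bound $\tau(\lambda(n))\le 2^{\Omega(\lambda(n))}$ together with known averages of $\Omega(p-1)$ over primes yields $\sum_{\omega(n)\le r_0}\tau(\lambda(n))\ll x\exp\bigl(O(r_0(\log\log x)^2)\bigr)$. Combined with the improved lower bound $T_\lambda(x)\ge xF(x)^{2e^{-\gamma/2}+o(1)}$ of Theorem~1.1, this is $o(T_\lambda(x))$ so long as $r_0(\log\log x)^2=o\bigl(\sqrt{\log x/\log\log x}\bigr)$; taking $s=(\log\log x)^2$, the bound $r_0=Ks+K$ comfortably meets this requirement.

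The second piece is the principal obstacle. The condition $v_2(\lambda(n))>s$ forces either $2^{s+3}\mid n$ (of trivial density $2^{-s-3}$) or the existence of an odd prime divisor $p\mid n$ with $p\equiv1\pmod{2^{s+1}}$. To handle the latter while preserving the divisor weight $\tau(\lambda(n))$ uniformly in $s\to\infty$, one applies the Bombieri--Vinogradov type inequality stated in the introduction (from \cite{F}) with modulus $r=2^{s+1}$; the inner refinement $q\le Q/r$ emphasized in the introduction is precisely what prevents the modulus from spoiling the error term. Combined with an upper-bound sieve that extracts the anomalous prime $p$ (for instance, writing $\tau(\lambda(n))\le\tau(p-1)\tau(\lambda(n/p))$ and applying a Rankin-type device to the remaining factor), this leads to an estimate of the shape $\ll 2^{-s}T_\lambda(x)(\log x)^{O(1)}$, which is $o(T_\lambda(x))$ once $s\gg\log\log x$. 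Taking $s=(\log\log x)^2$ then satisfies both requirements and completes the proof.
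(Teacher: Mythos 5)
Your strategy is, at bottom, the same as the paper's: exploit the $2$-adic disparity $v_2(\phi(n))\gtrsim\omega(n)$ versus $v_2(\lambda(n))=\max_i v_2(p_i-1)$, and split $[1,x]$ into a set where the modulus-$2$ power in $\lambda$ is anomalously large, a set where $\omega(n)$ is small, and the rest, on which the ratio $\tau(\phi(n))/\tau(\lambda(n))\ge R(n)$ is large. The paper realizes this as the explicit partition $\mathcal{E}_1,\mathcal{E}_2,\mathcal{E}_3$ (with $k\asymp\log_2 x$, $\omega\asymp\sqrt{\log x}/\log_2^2 x$, and $\omega/k\to\infty$ directly, giving a quantitative rate $\ll\log_2^3 x/\sqrt{\log x}$), whereas you derive the same partition from the clean reformulation ``for every fixed $K$, the set $\{R(n)<K\}$ contributes $o(\cdot)$'' and then pass $K\to\infty$. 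Both work; your version is a pleasant repackaging but not a different route.

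A few spots would need tightening if written up. First, the invocation of the Bombieri--Vinogradov/Fiorilli estimate for the second piece is unnecessary: the bound $\sum_{p\le x,\ p\equiv 1\bmod 2^{s+1}}\tau(p-1)/p \ll (\tau(2^{s+1})/\phi(2^{s+1}))\log x \asymp (s/2^s)\log x$ is exactly Lemma 2.3(7), which needs only Brun--Titchmarsh; BFI plays no role in the paper's proof of Theorem 1.2. Second, your ``Rankin-type device'' for controlling $\sum_{m\le x/p}\tau(\lambda(m))$ should be made precise; what the paper actually uses is Lemma 4.1 (the kernel-plus-rebuild argument), and to bound against $T_\lambda$ rather than $T_\phi$ you would need its $\lambda$-analogue, which does hold by the same proof (using $\lambda(n)\mid m\,\lambda(k)$ for $n=mk$ with squarefree kernel $k$, and $\lambda(k)\mid\lambda(kp)$). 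In fact this is more than required: bounding the exceptional pieces against $T_\phi$, as the paper does, already yields $\limsup T_\lambda/T_\phi\le 1/K$ for each $K$, so you could avoid the $\lambda$-version entirely. Third, the claimed estimate $\sum_{\omega(n)\le r_0}\tau(\lambda(n))\ll x\exp(O(r_0(\log_2 x)^2))$ carries a spurious extra factor of $\log_2 x$; via $\sum_{p\le x}\tau(p-1)/p\ll\log x$ the natural bound is $x(\log x)^{O(1)}\exp(O(r_0\log_2 x))$. Since your bound is weaker (larger) and still far below $T_\lambda$, this does not affect the conclusion, but the stated justification via averages of $\Omega(p-1)$ does not directly produce either estimate and should be replaced by the kernel/hyperbola computation the paper uses for $\mathcal{E}_2$.
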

Finally, we give a heuristic argument suggests that the constant in the upper bound is indeed optimal. Here, we try to extend the method in the proof of Theorem 1.1 by devising a binomial distribution model. However, we were unable to prove it. The main difficulty is due to the short range of $u$ ($u<\log^{A_1} x$) in the lemmas (see Lemma 2.1, 2.3, Corollary 2.1, and 2.2).
\begin{conj}
As $x\rightarrow\infty$, we have
$$
\sum_{n\leq x} \tau(\lambda(n))=x\exp\left( 2\sqrt 2 e^{-\frac{\gamma}2} \sqrt{\frac{\log x}{\log\log x}}(1+o(1))\right).
$$
\end{conj}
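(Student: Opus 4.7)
The plan is to refine the lower bound argument used for Theorem 1.1 by replacing its Poisson-type input with a \emph{binomial distribution model}, thereby closing the remaining gap of $\sqrt{2}$ in the constant. The matching upper bound $F(x)^{2\sqrt{2}\,e^{-\gamma/2}+o(1)}$ is already in force via $\tau(\lambda(n))\leq\tau(\phi(n))$ and~\cite{LP}, so only the lower bound requires new input.

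The starting point is the multiplicativity
\[
\tau(\lambda(n))=\prod_{q}\bigl(v_q(\lambda(n))+1\bigr),
\]
where $v_q$ denotes the $q$-adic valuation. In the argument behind Theorem 1.1, one essentially retains only the information whether $q\mid\lambda(n)$, so each factor contributes at most $2$; this is precisely what yields the constant $2\,e^{-\gamma/2}$. To extract the extra $\sqrt{2}$ one must track the multiplicity $v_q(\lambda(n))=\max_{p\mid n}v_q(p-1)$. Writing $\omega(n)$ for the number of distinct prime divisors of $n$ and fixing a prime $q$, the heuristic treats the values $\{v_q(p-1)\}_{p\mid n}$ as independent samples with $v_q(p-1)=m$ occurring with probability $q^{-m}(1-1/q)$ for $m\geq1$ (and $0$ otherwise). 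Assuming further independence across $q$,
\[
\frac{1}{x}\sum_{n\leq x}\tau(\lambda(n))\;\gtrsim\;\prod_{q\leq Q}\mathbb{E}\bigl[v_q(\lambda(n))+1\bigr],\qquad \mathbb{E}\bigl[v_q(\lambda(n))+1\bigr]=1+\sum_{m\geq 1}\Bigl(1-\bigl(1-\tfrac{1}{(q-1)q^{m-1}}\bigr)^{\omega(n)}\Bigr).
\]
Expanding the logarithm by Mertens and optimizing the cutoff $Q\approx \log x/(\log\log x)$ should produce the constant $2\sqrt{2}\,e^{-\gamma/2}$; the additional $\sqrt{2}$ is exactly the gain from counting all divisors of $q^{v_q(\lambda(n))}$ rather than merely detecting $q\mid\lambda(n)$.

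To turn this into a rigorous lower bound, I would expand $\tau(\lambda(n))=\sum_{d\mid\lambda(n)}1$ and factor $d=\prod_q q^{m_q}$; for each exponent profile $(m_q)_q$ the condition $d\mid\lambda(n)$ requires that for every $q$ some prime $p\mid n$ satisfies $p\equiv 1\pmod{q^{m_q}}$. Inclusion--exclusion on the primes $p\mid n$ converts this counting into linear combinations of $\pi(x;qr,1)$, to which the Bombieri--Vinogradov--Fouvry bound from~\cite[Theorem 2.1]{F} should apply, in direct analogy with Lemmas 2.1--2.3. The Gauss circle idea referenced in the introduction, which furnishes the factor $2$ in Theorem 1.1, would here be replaced by a finer lattice-point count in the exponent-profile space $(m_q)_q$ that recovers the binomial expectation above.

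The main obstacle, as the authors emphasize, is the restriction $u<\log^{A_1}x$ inherited from the level-of-distribution input. To realize the binomial gain one must handle moduli $q^{m_q}$ with $q$ comparable to $\log x/(\log\log x)^{O(1)}$ and $m_q\geq 2$, so that $q^{m_q}$ greatly exceeds $\log^{A_1}x$; worse, the model requires several such $q$'s \emph{simultaneously}, which pushes the effective joint modulus past the level $x^{1/2-\epsilon}$ covered by~\cite[Theorem 2.1]{F}. Unconditionally closing this gap appears to require a new level-of-distribution result of Elliott--Halberstam strength with well-factorable moduli, or else fresh combinatorial cancellation between terms with different exponent profiles; conditional on such a hypothesis, the optimization sketched above would go through essentially as written.
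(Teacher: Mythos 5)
The statement you are addressing is a conjecture, and the paper accordingly offers only a heuristic argument (Section 5), not a proof; your proposal is likewise a heuristic, which is the appropriate register. The two heuristics arrive at the same central obstruction (the restriction $u<\log^{A_1}x$ inherited from the level-of-distribution input) but are organized differently. The paper leverages the machinery already proved for Theorem 1.1: it writes $\tau_z(\lambda(n))=\tau_{z,z^2}(\lambda(n))\,\tau_{z^2}(\lambda(n))$, notes that the tail $\tau_{z^2}$ is handled rigorously by the restrictions R1--R3, and isolates the \emph{only} problematic range $z<q\leq z^2$ (with $z=\sqrt{\log x}$) into a correction factor $\mathfrak{U}_{v,z}(x)/\mathfrak{S}_{v,z}(x)$. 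It then models $X_q$, the number of prime factors $p_i$ of $n$ with $q\mid p_i-1$, as binomial with parameters $v$ and $2/q$ --- the $2/q$ being the $\tau_z$-weighted density from Lemma 2.3 --- and computes $\mathbf{E}[A_q]=2(1-1/q)^v-(1-2/q)^v$, reducing the conjecture to the assertion that these corrections multiply to $(1+o(1))^v$. Your framing is more global: you expand $\tau(\lambda(n))=\prod_q(v_q(\lambda(n))+1)$ directly, treat $v_q(p-1)$ over all $q$ as independent samples, and optimize a cutoff $Q$. This is a legitimate alternative heuristic, but two details would need fixing before the optimization can be trusted to reproduce the constant. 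First, the density you posit, $q^{-m}(1-1/q)$, is the distribution of $v_q$ for a \emph{random integer}; for $p-1$ with $p$ prime it should be $1/q^m$ for $m\geq1$ (coming from $1/\phi(q^m)$), and once one weights by $\tau_z(p-1)$ as the paper's lemmas do, the effective probability of $q\mid p_i-1$ becomes $2/q$ rather than $1/(q-1)$ --- this factor of $2$ is precisely what drives the paper's $2\log a$ in Lemma 2.4 and ultimately the $\sqrt 2$. Second, in the paper's framework the higher powers $q^{m}$ with $m\geq2$ in the range $q>z$ are \emph{excluded} by R2 and shown to contribute negligibly; the real obstruction is not large $m_q$ but the simultaneous occurrence of several distinct $q_1,\ldots,q_j\in(z,z^2]$, whose product exceeds $\log^{A_1}x$ once $j$ is moderate. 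You do mention this joint-modulus problem, so you reach the same diagnosis as the paper's concluding remark, but the emphasis on $m_q\geq 2$ is a red herring in this setting. Your sketch of the rigorous path (expand $\tau(\lambda(n))=\sum_{d\mid\lambda(n)}1$, inclusion--exclusion to $\pi(x;qr,1)$, invoke Fiorilli's Bombieri--Vinogradov variant) is essentially what the paper's Lemmas 2.1--2.3 do, and your point that it would require Elliott--Halberstam strength or new cancellation is exactly the state of affairs the authors describe.
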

Throughout this paper, $x$ is a positive real number, $n$, $k$ are positive integers, and $p$, $q$ are prime numbers. We use Landau symbols $O$ and $o$. Also, we write $f(x)\asymp g(x)$ for positive functions $f$ and $g$, if $f(x)=O(g(x))$ and $g(x)=O(f(x))$. We will also use Vinogradov symbols $\ll$ and $\gg$. We write the iterated logarithms as $\log_2 x = \log\log x$ and $\log_3 x = \log\log\log x$. The notations $(a,b)$ and $[a,b]$ mean the greatest common divisor and the least common multiple of $a$ and $b$ respectively. We write $P_z=\prod_{p\leq z} p$. We also use the following restricted divisor functions:
$$
\tau_z(n):=\prod_{\substack{{p^e||n}\\{p>z}}}\tau(p^e), \ \
\tau_{z,w}(n):=\prod_{\substack{{p^e||n}\\{z<p\leq w}}}\tau(p^e), \ \
\textrm{and } \
\tau_z'(n):=\prod_{\substack{{p^e||n}\\{p\leq z}}}\tau(p^e).$$
Moreover, for $n>1$, denote by $p(n)$ the smallest prime factor of $n$.

\bf Acknowledgement. \rm
The author would like to thank Carl Pomerance for encouraging him to work on this problem, and numerous valuable comments and conversations.

\section{Lemmas}
The following lemma is ~\cite[Lemma3]{LP} with a slightly relaxed $z$, and it is essential toward proving the theorem. This is stated and proved with the Chebyshev functions $\psi(x):=\sum\limits_{n\leq x} \Lambda(n)$  and $\psi(x;q,a):=\sum\limits_{n\leq x, \ n\equiv a \ \mathrm{mod} \ q} \Lambda(n)$ in ~\cite{LP2}. Here, we use the prime counting functions $\pi(x):=\sum\limits_{p\leq x} 1$  and $\pi(x;q,a):=\sum\limits_{p\leq x, \ p\equiv a \ \mathrm{mod} \ q} 1$ instead. We are allowed to do these replacements by applying the partial summation.
\begin{lemma}
Let $0<\lambda<\frac1{10}$. Assume that  $z\leq \lambda \log x$. Then for any $A>0$, there is $B=B(A)>0$ such that for $M=\log^B x$, and $Q=\frac xM$,
\begin{equation}
E_z(x):=\sum_{r|P_z}\mu(r)\sum_{\substack{{n\leq Q}\\{r|n}}}\left(\pi(x;n,1)-\frac{\pi(x)}{\phi(n)}\right)\ll_{A,  \lambda}\frac x{\log^A x}. \end{equation}

Let $0<\lambda <\frac1{10}$. Assume that $u$ is a  positive integer with $p(u)>z$, $u<(\log x)^{A_1}$  and $\tau(u)<A_1$. Then for any $A>0$, there is $B=B(A,A_1)>0$ such that for $M=\log^B x$, and $Q=\frac xM$,
\begin{equation}
E_{u,z}(x):=\sum_{r|P_z}\mu(r)\sum_{\substack{{n\leq Q}\\{r|n}}}\left(\pi(x;[u,n],1)-\frac{\pi(x)}{\phi([u,n])}\right)\ll_{A, A_1,\lambda} \frac{x}{\log^A x}.
\end{equation}
\end{lemma}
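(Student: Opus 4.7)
The plan is to derive both estimates as direct consequences of the BFI-type inequality stated in the introduction, with the second part being a genuine extension that incorporates the small modulus $u$.

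For $E_z(x)$, I would substitute $n = qr$ in the inner sum to obtain
$$
E_z(x) = \sum_{r\mid P_z}\mu(r)\sum_{q\leq Q/r} E(x;qr,1),
$$
pull the absolute value inside, and enlarge the outer sum to $r\leq R := x^{\lambda'}$ for some fixed $\lambda'\in(\lambda,1/10)$. This is legitimate because $r\mid P_z$ gives $r\leq P_z = e^{\theta(z)}\leq e^{(1+o(1))\lambda\log x}$, which is $\leq x^{\lambda'}$ for $x$ large. The BFI-type inequality with $a=1$ and this $R$ then yields the bound $\ll_{A,\lambda} x/\log^A x$.

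For $E_{u,z}(x)$, I would make the same substitution $n = rs$ with $r\mid P_z$ and $s\leq Q/r$; since $\gcd(u,r) = 1$ (because $p(u)>z$ and $r\mid P_z$), one has $[u,rs] = r\,[u,s]$. I would then decompose $s = s_1 s_2$, where $s_1 = \prod_{p\mid u} p^{v_p(s)}$ is the part of $s$ supported on the primes of $u$ and $\gcd(s_2,u) = 1$. Then $[u,s] = v\cdot s_2$ with $v := [u,s_1]$, and sieving $\gcd(s_2,u)=1$ via M\"obius ($s_2 = et$, $e\mid u$) gives
$$
E_{u,z}(x) = \sum_{s_1\mid u^\infty}\sum_{e\mid u}\mu(e)\sum_{r\mid P_z}\mu(r)\sum_{t\leq Q/(rs_1 e)} E(x;\,(rve)\,t,\,1).
$$
For each fixed pair $(s_1,e)$, I would treat $r_* := rve$ as the small BFI variable and $t$ as the large one. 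Then $r_*\leq P_z\cdot u^2\leq x^{\lambda''}$ for some fixed $\lambda''\in(\lambda',1/10)$ and $x$ large, while the inner range satisfies $t\leq (v/s_1)\,Q/r_*\leq uQ/r_*$ since $v/s_1 = u/\gcd(u,s_1)\leq u$. Since $u\leq \log^{A_1}x$, the inflation by $u$ can be absorbed by taking $B = B(A,\lambda'') + A_1$ in the BFI parameter, so that the effective $Q^* = uQ = x/\log^{B-A_1}x$ satisfies BFI's modulus hypothesis. The outer sum over $(s_1,e)$ contributes at most $(\log Q)^{\omega(u)}\cdot\tau(u)\leq A_1\log^{A_1}x$ terms (using $\omega(u)\leq\tau(u)\leq A_1$), which is absorbed by a routine readjustment of $A$.

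The principal obstacle is the modulus inflation by the factor $u$: the LCM $[u,s]$ can exceed $s$ by up to $u$, so the effective modulus range increases from $Q$ to $uQ$, and this must be paid for with a power of $\log x$ in the BFI parameter $B$. This is precisely where the hypotheses $u\leq \log^{A_1}x$ and $\tau(u)\leq A_1$ are used. A secondary technical point is that, after the reindexing $q = ve t$, the inner BFI sum runs over a proper sub-range of $[1,Q^*/r_*]$ and over multiples of $ve$ only; the restriction on outer $r_*$-values is harmless by non-negativity of the absolute values, while the sub-range restriction on $t$ is handled by the standard sub-interval (or smoothly weighted) version of the BFI inequality, which holds uniformly for any cutoff $T\leq Q^*/r_*$.
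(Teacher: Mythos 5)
Your treatment of part (1) is the same as the paper's: substitute $n=qr$, use $r\mid P_z\Rightarrow r\le P_z=e^{(1+o(1))z}\le x^{\lambda'}$, and invoke the BFI/Fiorilli inequality with $a=1$. That part is fine.

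Part (2), however, contains a genuine gap. After writing $s=s_1s_2$ with $s_1=\prod_{p\mid u}p^{v_p(s)}$, you set $v:=[u,s_1]$ and claim $r_*:=rve\le P_z\cdot u^2\le x^{\lambda''}$. But $v=[u,s_1]\ge s_1$, and $s_1\mid u^\infty$ ranges over \emph{all} prime-power products supported on $\operatorname{rad}(u)$ up to $Q/(re)$; nothing bounds $s_1$ by a power of $u$. For instance if $p\mid u$, then $s_1=p^k$ is admissible for $k$ up to roughly $\log Q/\log p$, giving $v\ge p^k$ of size up to $Q$. So $r_*$ can be as large as $\gg Q$, far outside the admissible range $r_*\le x^{\lambda}$ for the BFI inequality, and your per-$(s_1,e)$ application of BFI is invalid for these large $s_1$. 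Counting the number of $(s_1,e)$ pairs (your $(\log Q)^{\omega(u)}\tau(u)$ estimate) does not help, because the issue is not the number of terms but that the inequality you want to apply to each term does not hold when $r_*$ is large. You would need a separate argument (e.g.\ a trivial bound via Brun--Titchmarsh combined with the sparsity of $s_1\mid u^\infty$) to dispose of the large-$s_1$ range, and this is not supplied.

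The paper sidesteps this entirely by a coarser decomposition: it partitions $\{q\le Q/r\}$ according to $d:=(u,q)$, so $d$ runs over the $\tau(u)<A_1$ divisors of $u$, and for $q\in A_d$ one has $[u,q]=uq/d$. A M\"obius inclusion--exclusion over $s\mid u/d$ converts the condition $(u,q)=d$ into the congruence $ds\mid q$, and the change of variables $q\mapsto usq'$ rewrites the inner sum as $\sum_{q'}E(x;q'r,1)$ with the constraint $us\mid q'$ and $q'\le uQ/(rd)$. The crucial point is that the extraneous modulus factor is $us\le u^2<\log^{2A_1}x$, which is harmlessly small and can be fed into the auxiliary estimate (4); no unbounded variable is created. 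If you want to repair your argument, replace the factorization $s=s_1s_2$ by the paper's partition by $(u,q)=d$, or else split your $s_1$-sum at a threshold like $s_1\le u^{2}$ and handle the tail separately by trivial bounds.
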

 \begin{proof}[Proof of (1)]
For $(a,n)=1$, we write $E(x;n,a):=\pi(x;n,a)-\frac{\pi(x)}{\phi(n)}$. If $r|P_z$, we have by the Prime Number Theorem, $r\leq R:=P_z=\exp(z+o(z))\leq x^{\lambda'}$ with $0<\lambda'<1/10$. By partial summation and diadically applying ~\cite[Theorem 2.1]{F}, we have for $B=B(A)>0$, $M=\log^B x$, and $Q=x/M$,
\begin{equation}
\sum_{\substack{{r\leq R}\\{(r,a)=1}}}\left|\sum_{\substack{{q\leq \frac Q{ r}}\\{(q,a)=1}}}E(x;qr,a) \right|\ll_{A,\lambda} \frac x{\log^A x}.
\end{equation}
Taking $a=1$ and   $|\mu(r)|\leq 1$, (1) follows.
\end{proof}
\begin{proof}[Proof of (2)]
Let $d\leq x^{\epsilon}$ so that $dR\leq x^{\lambda'}$ with $0<\lambda'<1/10$. By (3), there exist $B=B(A)>0$ such that we have for $M=\log^B x$ and $Q=x/M$,
\begin{equation}
\sum_{ r\leq R }\left|\sum_{ q\leq \frac Q{ r} }E(x;dqr,1)\right|=\sum_{\substack{{r\leq dR}\\{r\equiv 0 \ \mathrm{ mod } \ d} }}\left|\sum_{ q\leq \frac Q{  r} }E(x;qr,1)\right|
\leq \sum_{ r\leq dR}  \left|\sum_{ q\leq \frac Q{   r}} E(x;qr,1)\right|\ll_{A,\lambda} \frac x{\log^A x}.
\end{equation}
By $(u,r)=1$, we have $[u,n]=[u,qr]=r[u,q]=ruq/(u,q)$. We partition the set of $q\leq \frac Qr$ as $\bigcup_{d|u} A_d$, where $q\in A_d$ if and only if $(u,q)=d$. Let $B_{Q,d}=\left\{q\leq \frac Qr: q \equiv 0 \ \mathrm{ mod } \ d\right\}$. By inclusion-exclusion, we have for any $d|u$,
$$
\sum_{q\in A_d}E\left(x;\frac{ruq}d,1\right) = \sum_{s|\frac ud} \mu(s) \sum_{q\in B_{Q,ds}}E\left(x;\frac{ruq}d,1\right).
$$
It is clear that
$$\sum_{q\in B_{Q,ds}}E\left(x;\frac{ruq}d,1\right)=\sum_{q\in B_{\frac{uQ}d,us}}E(x;qr,1).$$
Since $r\leq R:=P_z<x^{\lambda'}$ with $\lambda'<\frac1{10}$, $\frac{uQ}d\leq Q\log^{A_1}x$, and $us<\log^{2A_1} x < x^{\epsilon}$, we have by (4),
$$\sum_{r\leq R} \left|\sum_{q\in B_{\frac{uQ}d,us}}E(x;qr,1)\right|\ll_{A,  A_1, \lambda}   \frac x {\log^{A} x} $$
with a suitable choice of $B=B(A, A_1)$.
Then
\begin{align*}\sum_{r\leq R}\left|\sum_{q\in A_d}E\left(x;\frac{ruq}d,1\right)\right| &=\sum_{r\leq R}\left|\sum_{s|\frac ud}\mu(s)\sum_{q\in B_{Q,ds}}E\left(x;\frac{ruq}d,1\right)\right|\\
&\leq \sum_{s|\frac ud}\sum_{r\leq R}\left|\sum_{q\in B_{Q,ds}}E\left(x;\frac{ruq}d,1\right)\right|\\
&\ll_{A,A_1,\lambda} \tau\left(\frac ud\right) \frac x {\log^{A} x}.\end{align*}
Thus, summing over $d|u$, we have
\begin{align*}\left|\sum_{r|P_z}\mu(r) \sum_{q\leq \frac Qr} E(x;[u,qr],1)  \right| &\leq \sum_{d|u}\sum_{r\leq R}\left|\sum_{q\in A_d}E\left(x;\frac{ruq}d,1\right)\right|\\
&\ll_{A,A_1,\lambda} (\tau(u))^2 \frac x {\log^{A} x}\ll_{A, A_1,\lambda}  \frac x {\log^{A} x}.\end{align*}
Thus, we have the result (2). \end{proof}
The following is ~\cite[Lemma 5]{LP} with a slightly relaxed $z$.
\begin{lemma}
Let $0<\lambda<\frac1{10}$, and $1<z\leq \lambda \log x$.  Let $c_1=e^{-\gamma}$. Then we have
\begin{equation}
R_z(x):=\sum_{p\leq x} \tau_z(p-1)=c_1 \frac x{\log z} + O\left(\frac x{\log^2 z}\right),
\end{equation}
and for $1<z\leq \frac{\log x}{\log_2^2 x}$,
\begin{equation}
S_z(x):=\sum_{p\leq x}\frac{\tau_z(p-1)}p=c_1\frac{\log x}{\log z} + O\left(\frac{\log x}{\log^2 z}\right).
\end{equation}
\end{lemma}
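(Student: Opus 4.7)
The plan is to expand the divisor count as $\tau_z(p-1)=\sum_{d\mid p-1,\,(d,P_z)=1}1$, giving
\[
R_z(x)=\sum_{\substack{d<x\\(d,P_z)=1}}\pi(x;d,1),
\]
and then split at $Q=x/\log^{B}x$. On the main range $d\leq Q$, the indicator $[(d,P_z)=1]=\sum_{r\mid(d,P_z)}\mu(r)$ converts the sum into exactly the shape handled by Lemma 2.1 (1), giving
\[
\sum_{\substack{d\leq Q\\(d,P_z)=1}}\pi(x;d,1)=\pi(x)\sum_{\substack{d\leq Q\\(d,P_z)=1}}\frac{1}{\phi(d)}+O_A\!\left(\frac{x}{\log^A x}\right).
\]

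The next step is to evaluate the Mertens-type sum $\sum_{d\leq Q,\,(d,P_z)=1}1/\phi(d)$. First I would replace $1/\phi(d)$ by $1/d$ at an acceptable cost of $O(\log Q/(z\log z))$, since for $z$-rough $d$ the ratio $d/\phi(d)-1$ telescopes to a series over $p\mid d$ with $p>z$. For $\sum_{d\leq Q,\,(d,P_z)=1}1/d$, Möbius inversion over $r\mid P_z$ combined with the quantitative Mertens identity $\prod_{p\leq z}(1-1/p)=c_1/\log z+O(1/\log^2 z)$ and the differentiated form $\sum_{r\mid P_z}\mu(r)\log r/r=-\prod_{p\leq z}(1-1/p)\cdot(\log z+O(1))$ yields $c_1\log Q/\log z+O(\log Q/\log^2 z)$. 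Together with $\pi(x)=x/\log x\cdot(1+O(1/\log x))$ and $\log Q=\log x(1+O(\log_2 x/\log x))$, this delivers the main term $c_1 x/\log z+O(x/\log^2 z)$. For the tail $d>Q$, the involution $d\mapsto m:=(p-1)/d<\log^{B}x=:M$ gives
\[
\sum_{\substack{d>Q\\(d,P_z)=1}}\pi(x;d,1)\leq\sum_{m\leq M}\pi(x;m,1)\ll\frac{x\log_2 x}{\log x}
\]
by Brun-Titchmarsh, and this is absorbed into $O(x/\log^2 z)$ since $\log^2 z\cdot\log_2 x\leq\log_2^{3}x\leq\log x$.

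For (6), the plan is to deduce the estimate from (5) by Abel summation:
\[
S_z(x)=\frac{R_z(x)}{x}+\int_{2}^{x}\frac{R_z(t)}{t^2}\,dt.
\]
Estimate (5) is only applicable for $t\geq T:=e^{z/\lambda}$; on $[T,x]$ the integrand integrates to $(c_1/\log z)\log(x/T)+O(\log x/\log^2 z)$. On $[2,T]$ I would use the Titchmarsh divisor bound $\sum_{p\leq t}\tau(p-1)\ll t$, contributing $\ll\log T=z/\lambda$. Under the sharper hypothesis $z\leq\log x/\log_2^{2}x$, both $\log T=z/\lambda$ and the gap $\log x-\log(x/T)$ are $O(\log x/\log^2 z)$, so the integral collapses to $c_1\log x/\log z+O(\log x/\log^2 z)$.

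The chief obstacle is producing the Mertens-type estimate with a genuinely secondary error of order $\log Q/\log^2 z$, rather than the softer $o(\log Q/\log z)$ that Mertens' theorem alone provides. This forces a quantitative form of Mertens giving effective control on $\prod_{p\leq z}(1-1/p)-c_1/\log z$, applied uniformly across $1<z\leq\lambda\log x$; the passage from $1/d$ to $1/\phi(d)$ must also be done with an error beating $1/\log^2 z$, which requires tracking the rough-prime contribution $\sum_{p>z}1/(p(p-1))\ll 1/z$. A secondary delicate point is matching Abel summation at the boundary $T=e^{z/\lambda}$ in (6), where any slack in the $[2,T]$ estimate appears directly in the final error and forces the sharper hypothesis $z\leq\log x/\log_2^{2}x$.
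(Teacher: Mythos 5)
Your proposal is correct and follows the same architecture as the paper's proof of this lemma: expand $\tau_z(p-1)$ over $z$-rough divisors, split at $Q=x/\log^B x$, evaluate the range $d\leq Q$ via Lemma 2.1(1) together with a Mertens-type computation of $\sum_{d\leq Q,\,(d,P_z)=1}\pi(x)/\phi(d)$, control the tail $d>Q$ by divisor-switching and Brun--Titchmarsh, and obtain (6) by partial summation with the integral split at $t=e^{z/\lambda}$ and the trivial bound $R_z(t)\ll t$ on $[2,e^{z/\lambda}]$. The one genuine departure is that the paper simply cites \cite[Lemma 4]{LP} for the Mertens sum, whereas you reprove it: first trading $1/\phi(d)$ for $1/d$ at cost $O(\log Q/(z\log z))$, then applying M\"obius inversion over $r\mid P_z$ and the quantitative product $\prod_{p\leq z}(1-1/p)=c_1/\log z+O(1/\log^2 z)$ together with its logarithmically differentiated companion $\sum_{r\mid P_z}\mu(r)\log r/r=-c_1+O(1/\log z)$. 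This route works; if you write it out, make explicit that the $O(1)$ constant terms ($\gamma c_1/\log z$, $c_1$) and the replacement $\pi(x)\log Q\to x$ (which costs $O(x\log_2 x/(\log x\log z))$) are absorbed because $\log z\ll\log_2 x$ and $\log_2^3 x\ll\log x$. Your tail bound $\sum_{m\leq M}\pi(x;m,1)\ll x\log_2 x/\log x$ is a slightly cleaner form of the paper's estimate $\sum_{r\mid P_z}\sum_{k\leq M}\pi(x;rk,1)\ll x\log z\log M/\log x$ and is in fact a bit sharper; both fit comfortably inside $O(x/\log^2 z)$.
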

\begin{proof}[Proof of (5)]
Take $A=2$ and the corresponding $B(A)$ and $M$ in Lemma 2.1(1). Then by inclusion-exclusion,
$$
R_z(x)= \sum_{d\in D_z(x)} \pi(x;d,1)= \sum_{d\in D_z\left(\frac xM\right)} \pi(x;d,1)+\sum_{r|P_z}\mu(r)\sum_{\frac x{rM}<q\leq \frac xr}\pi(x;qr,1)=R_1+R_2, \textrm{ say.}$$
By ~\cite[Lemma 4]{LP} and Lemma 2.1(1),
$$
R_1=\sum_{d\in D_z\left(\frac xM\right)}  \frac{\pi(x)}{\phi(d)} + \sum_{r|P_z}\mu(r)\sum_{q\leq \frac x{rM}} E(x;qr,1) =c_1\frac x{\log z}+O\left(\frac x{\log^2 z}\right) + O\left(\frac x{\log^2 x}\right).$$
By divisor-switching technique and Brun-Titchmarsh inequality as in ~\cite{LP2}, we have
$$
R_2\ll \sum_{r|P_z}\sum_{k\leq M}\pi(x;rk,1)\ll \sum_{r|P_z}\sum_{k\leq M} \frac{x}{\phi(rk) \log x}\ll \frac{x\log z\log M}{\log x}\ll \frac x{\log^2 z}.$$
Therefore, (5) follows.
 \end{proof}
\begin{proof}[Proof of (6)]
By partial summation,
$$S_z(x)=\frac{R_z(t)}t|_2^x + \int_2^x \frac{R_z(t)}{t^2}dt.$$
We split the integral at $z =  \lambda\log t$. Then by (4),
$$
\int_{z\leq \lambda \log t} \frac{R_z(t)}{t^2}dt = \int_{e^{z/\lambda}}^x \left(c_1 \frac t{\log z} + O\left(\frac t{\log^2 z}\right)\right)\frac{dt}{t^2}=c_1\frac{\log x}{\log z} + O\left(\frac{\log x}{\log^2 z}\right).$$
On the other hand, by the trivial bound $R_z(t)\ll t$,
$$
\int_{z>\lambda \log t} \frac{R_z(t)}{t^2}dt \ll \int_2^{e^{z/\lambda}} t \frac{dt}{t^2} \ll  z.$$
Since $z\log^2 z\ll \log x$, (6) follows.
\end{proof}
The following is ~\cite[Lemma 6]{LP} with a wider range of $z$. This relaxes the rather severe restriction $z\leq \frac{\sqrt{\log x}}{\log_2^6 x}$.
\begin{lemma}
Let $1\leq u \leq x$ be any positive integer. Then
\begin{equation}
R_{u,z}(x):=\sum_{\substack{{p\leq x}\\{p \equiv 1 \ \mathrm{ mod } \ u}}}\tau_z(p-1) \ll \frac{\tau(u)}{\phi(u)}x, \ \ S_{u,z}(x):=\sum_{\substack{{p\leq x}\\{p \equiv 1 \ \mathrm{ mod } \ u}}}\frac{\tau_z(p-1)}p\ll\frac{\tau(u)}{\phi(u)}\log x,
\end{equation}
and $\phi(u)$ can be replaced by $u$ if $p(u)>z$ and $\tau(u)<A_1$.

Assume that $u$ is a positive integer with $p(u)>z$, $u< (\log x)^{A_1}$ and $\tau(u)<A_1$. Then for $z\leq \lambda \log x$,
\begin{equation}
R_{u,z}(x) =\frac{\tau(u)}{u}R_z(x) \left(1+O\left(\frac 1{\log z}\right)\right),
\end{equation}
and for $z\leq \frac{\log x}{\log_2^2 x}$,
\begin{equation}
S_{u,z}(x) =\frac{\tau(u)}u S_z(x)\left(1+O\left(\frac 1{\log z}\right)\right).
\end{equation}
\end{lemma}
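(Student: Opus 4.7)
The plan is to mirror the structure of the proof of Lemma 2.2, with Lemma 2.1(2) replacing Lemma 2.1(1) to handle the extra progression $p \equiv 1 \pmod{u}$. Writing $\tau_z(p-1) = \#\{d \mid p-1 : (d,P_z)=1\}$ and swapping the order of summation, one begins with $R_{u,z}(x) = \sum_{(d,P_z)=1} \pi(x;[u,d],1)$ and splits at $d = Q := x/\log^B x$. For $d \leq Q$, M\"obius inversion together with Lemma 2.1(2) replaces $\pi(x;[u,d],1)$ by $\pi(x)/\phi([u,d])$ with aggregate error $O_A(x/\log^A x)$. For $d > Q$, the divisor-switch $e = (p-1)/d$ forces $e < \log^B x$; summing over $e$ by Brun--Titchmarsh gives a contribution $\ll x(\log_2 x)\tau(u)/(\phi(u)\log x)$, which is absorbed into the $O(1/\log z)$ error under the hypotheses.

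The main-term calculation is the heart of (8). Using $1/\phi([u,d]) = \phi((u,d))/(\phi(u)\phi(d))$ and the multiplicativity of $d \mapsto \phi((u,d))/\phi(d)$, one compares Euler factors of $\sum_{d \leq Q,\,(d,P_z)=1}1/\phi([u,d])$ with those of $\sum_{d \leq Q,\,(d,P_z)=1}1/\phi(d)$. At each prime $p \mid u$ with $v_p(u) = e$, the new local factor $\tau(p^e) + 1/(p-1)$ exceeds the old local factor $1 + p/(p-1)^2$ by a ratio $\tau(p^e)(1 + O(1/p))$; at $p \nmid u$ the local factors agree. Since $p(u) > z$ and $\omega(u) = O(1)$ (by $\tau(u) < A_1$), the overall ratio of truncated sums is $(\tau(u)/\phi(u))(1 + O(1/z))$, and the identity $u/\phi(u) = 1 + O(1/z)$ converts this to $(\tau(u)/u)(1 + O(1/\log z))$. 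Combining with the main-term computation from the proof of (5), which gives $\pi(x)\sum_{d \leq Q,\,(d,P_z)=1}1/\phi(d) = R_z(x)(1 + O(1/\log z))$, yields (8).

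Bound (7) follows from the same decomposition without requiring asymptotic precision: Brun--Titchmarsh applied in both ranges of $d$ already gives $R_{u,z}(x) \ll (\tau(u)/\phi(u))x$, and similarly for $S_{u,z}(x)$ by partial summation; under $p(u) > z$, $\tau(u) < A_1$, one replaces $\phi(u)$ by $u$ via $u/\phi(u) = 1 + O(1/z)$. Statement (9) follows from (8) by partial summation, splitting the integral at $t = e^{z/\lambda}$ and handling the short range $t < e^{z/\lambda}$ by the trivial bound $R_{u,z}(t) \ll (\tau(u)/u)t$ from (7), exactly as in the proof of (6); the condition $z \log^2 z \ll \log x$ absorbs the boundary term.

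I expect the main obstacle to be verifying that the main-term ratio gives exactly $\tau(u)/u$ (rather than $\tau(u)/\phi(u)$) to within a $1 + O(1/\log z)$ factor: this requires careful control over two independent sources of multiplicative error in the Euler product --- the local factors at primes dividing $u$, and the switch from $\phi$-normalisation to $u$-normalisation --- both of which must be bounded by $O(1/z)$. The conditions $p(u) > z$, $\tau(u) < A_1$, and $u < (\log x)^{A_1}$ are precisely what is needed to uniformly dominate these error contributions and to guarantee compatibility with the truncation $n \leq Q$ required by Lemma 2.1(2).
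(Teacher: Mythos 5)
For (7), which must hold uniformly over every integer $1\le u\le x$, the route you propose --- splitting $d$ at $Q$ and applying Brun--Titchmarsh in both ranges --- does not deliver the stated bound once $u$ is large. When $u$ is comparable to $x$ and $d$ runs up to $Q\approx x/\log^B x$, the modulus $[u,d]$ can greatly exceed $x$ and Brun--Titchmarsh becomes vacuous; the divisor-switch range $e=(p-1)/d<\log^B x$ then contributes up to $\log^B x$ terms, which can dwarf $\tau(u)x/\phi(u)$ when $u$ is near $x$ with $\tau(u)=O(1)$. The paper's proof of (7) instead begins with the sub-multiplicative bound $\tau_z(p-1)\le\tau(p-1)\le\tau\bigl((p-1)/u\bigr)\tau(u)$, which explicitly pulls the residue modulus $u$ out of the divisor function, and then applies Dirichlet's hyperbola method to $\tau\bigl((p-1)/u\bigr)$; the resulting divisors $k\le\sqrt{x/u}$ satisfy $ku\le\sqrt{xu}<x$, which is exactly what keeps Brun--Titchmarsh uniformly nontrivial. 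This preliminary reduction is the idea missing from your proposal, and without it the $d$-decomposition does not close uniformly in $u$.

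For (8) and (9) your plan is essentially sound, but the main-term computation in (8) is more delicate than your sketch acknowledges. Truncated sums $\sum_{d\le Q,\ p(d)>z}1/\phi([u,d])$ do not literally have Euler products, so ``comparing Euler factors'' of two truncated sums is not directly meaningful; to make your ratio rigorous one must still decompose $d$ according to $(u,d)$ and re-sum, which is exactly the combinatorial argument the paper carries out: substitute $[u,d]=ud_1$, count the multiplicity $N_{d_1}=\#\{d:[u,d]=ud_1\}$, observe $N_{d_1}\le\tau(u)$ always and $N_{d_1}=\tau(u)$ when $(u,d_1)=1$, and squeeze the sum between an upper and a lower bound both of the common form $(\tau(u)/u)\bigl(c_1 x/\log z+O(x/\log^2 z)\bigr)$. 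Your choice of $A$ so that $x/\log^A x\ll(\tau(u)/u)\,x/\log^2 x$, your treatment of $R_2$ via divisor-switching, and your passage to (9) by partial summation with the split at $t=e^{z/\lambda}$ all agree with the paper.
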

\begin{proof}[Proof of (7)]
This is a uniform version of ~\cite[Lemma 3.7]{Pe}. We apply Dirichlet's hyperbola method as it was done in ~\cite[Lemma 3.7]{Pe}. First, we see that
$$
R_{u,z}(x)\leq \sum_{\substack{{p\leq x}\\{p \equiv 1 \ \mathrm{ mod } \ u}}}\tau(p-1)
 \leq \sum_{\substack{{p\leq x}\\{p \equiv 1 \ \mathrm{ mod } \ u}}}\tau\left(\frac{p-1}u\right)\tau(u)
 \leq 2\tau(u) \sum_{k\leq \sqrt{\frac xu}} \pi(x;ku,1).
$$
Since the sum is zero for $x\leq u$, we may assume that $x>u$. By Brun-Titchmarsh inequality,
$$
\pi(x;ku,1)\leq \frac{2x}{\phi(ku)\log\left(\frac{x}{ku}\right)}\leq \frac{4x}{\phi(u)\phi(k)\log\frac xu}.$$
Thus, summing over $k$ gives
$$
\sum_{k\leq \sqrt{\frac xu}} \pi(x;ku,1)\leq \frac{8x}{\phi(u)}\sum_{d=1}^{\infty}\frac{\mu^2(d)}{d\phi(d)}.
$$
Therefore, we have the result. The estimate for $S_{u,z}$ follows from partial summation.

We remark that for $u$ with $p(u)>z$,
$$
\frac{u\phi(d)}{\phi(ud)}=\prod_{p|u, p\nmid d} \left(1-\frac1p\right)^{-1}=1+O\left(\frac{\tau(u)}z\right), \ \ \
\frac1{\phi(u)}=\frac1u \prod_{p|u}\left(1-\frac1p\right)^{-1}=\frac1u \left(1+O\left(\frac{\tau(u)}z\right)\right).
$$
Therefore, $\phi(u)$ can be replaced by $u$ if $p(u)>z$ and $\tau(u)<A_1$.
\end{proof}
\begin{proof}[Proof of (8)]
We begin with
$$
R_{u,z}(x)=\sum_{d\in D_z(x)}\pi(x;[u,d],1).$$
Let $A>0$ be a positive number that $\frac x{\log^A x} \ll \frac {\tau(u)}{u} \frac{x}{\log^2 x}$, and $B(A)$ and $M$ be the corresponding parameters depending on $A$ in Lemma 2.1(2). By inclusion-exclusion,
$$
\sum_{d\in D_z(x)}\pi(x;[u,d],1)=\sum_{d\in D_z\left(\frac xM\right)}\pi(x;[u,d],1)+\sum_{r|P_z}\mu(r)\sum_{\frac{x}{rM}<q\leq \frac xr}\pi(x;[u,qr],1)=R_1+R_2, \ \textrm{ say}.$$
By Lemma 2.1(2), we have
$$
R_1=\sum_{d\in D_z\left(\frac xM\right)} \frac{\pi(x)}{\phi([u,d])}+\sum_{r|P_z}\mu(r)\sum_{q\leq \frac x{rM}} E(x;[u,qr],1)=\sum_{d\in D_z\left(\frac xM\right)} \frac{\pi(x)}{\phi([u,d])}+O\left(\frac {\tau(u)}{u} \frac{x}{\log^2 x}\right).
$$
The first sum is treated as follows:
\begin{align*}
\sum_{d\in D_z\left(\frac xM\right)} \frac{\pi(x)}{\phi([u,d])}&=\sum_{d_1\in D_z\left(\frac x{uM}\right)} \frac{\pi(x)N_{d_1}}{\phi(ud_1)}+O\left(\pi(x)\sum_{\substack{{\frac x{uM}<d_1\leq \frac xM }\\{p(d_1)>z}}}\frac {\tau(u)}{\phi(ud_1)} \right)\\
&=\sum_{d_1\in D_z\left(\frac x{uM}\right)} \frac{\pi(x)N_{d_1}}{\phi(ud_1)}+O\left(\pi(x)\frac{\tau(u) \log u }{\phi(u)\log z}\right)\\
&= \sum_{d_1\in D_z\left(\frac x{uM}\right)} \frac{\pi(x)N_{d_1}}{\phi(ud_1)}+O\left(\frac{\tau(u)}u \frac x{\log^2 z}\right),
\end{align*}
where $N_{d_1}=\left|\{d\in D_z\left(\frac xM\right) : [u, d]=ud_1\}  \right|$. Since $N_{d_1}\leq \tau(u)$ and $\phi(ud_1)\geq \phi(u)\phi(d_1)$, by ~\cite[Lemma 4]{LP},
$$
\sum_{d_1\in D_z\left(\frac x{uM}\right)} \frac{\pi(x)N_{d_1}}{\phi(ud_1)}\leq \frac{\tau(u)}{\phi(u)}\left(c_1\frac{x}{\log z}+O\left(\frac{x}{\log^2 z}\right)\right).
$$
Thus, we have the upper bound
$$
\sum_{d_1\in D_z\left(\frac x{uM}\right)} \frac{\pi(x)N_{d_1}}{\phi(ud_1)}\leq \frac{\tau(u)}{u}\left(c_1\frac{x}{\log z}+O\left(\frac{x}{\log^2 z}\right)\right).$$
On the other hand, $N_{d_1}=\tau(u)$ if $(u,d_1)=1$. Then, we may apply ~\cite[Lemma 4]{LP} since $P(u)\leq\log^{A_1} x$, we obtain that
\begin{align*}
\sum_{d_1\in D_z\left(\frac x{uM}\right)} \frac{\pi(x)N_{d_1}}{\phi(ud_1)}&\geq \frac{\tau(u)}u \left( \sum_{\substack{{d_1\in D_z\left(\frac x{uM}\right)}\\{(u,d_1)=1}}} \frac{\pi(x)}{\phi(d_1)}+O\left(\frac x{\log^2 z}\right)\right)\\&\geq \frac{\tau(u)}u \frac{\phi(u)}u \left(c_1\frac{x}{\log z}+O\left(\frac{x}{\log^2 z}\right)\right).\end{align*}
Thus, we have the lower bound
$$
\sum_{d_1\in D_z\left(\frac x{uM}\right)} \frac{\pi(x)N_{d_1}}{\phi(ud_1)}\geq\frac{\tau(u)}{u}\left(c_1\frac{x}{\log z}+O\left(\frac{x}{\log^2 z}\right)\right).
$$
This shows that
$$
R_1=\frac{\tau(u)}{u}\left(c_1\frac{x}{\log z}+O\left(\frac{x}{\log^2 z}\right)\right).$$
By divisor-switching technique and Brun-Titchmarsh inequality as in ~\cite{LP2}, we have
\begin{align*}
R_2 &\ll \sum_{r|P_z} \sum_{d|u}\sum_{s|\frac ud}\sum_{\substack{{\frac{x}{rM}<q\leq \frac xr}\\{ds|q}}}\pi\left(x;\frac{uqr}d,1\right)\\
&\ll \sum_{r|P_z} \sum_{d|u}\sum_{s|\frac ud}\sum_{\frac x{dsrM}<q\leq \frac x{dsr}}\pi\left(x;rusq,1\right)\\
&\ll \sum_{r|P_z}\sum_{d|u}\sum_{s|\frac ud} \sum_{k\leq \frac{dM}u} \pi(x;rusk ,1)\\
&\ll \sum_{r|P_z}\sum_{d|u}\sum_{s|\frac ud} \sum_{k\leq \frac{dM}u}\frac x{\phi(rusk) \log x}\ll \tau(u)\frac{x\log z\log u \log M}{\phi(u)\log x}\ll \frac{\tau(u)}u \frac x{\log^2 z}.
\end{align*}
This completes the proof of (8).
\end{proof}
\begin{proof}[Proof of (9)] We use (7) and (8), and apply partial summation as in (6).
\end{proof}
The following is used with inequality in ~\cite[Lemma 7]{LP}. Here, we obtain an equality that will be used frequently in this paper.
\begin{lemma}
Let $0<\lambda<\frac1{10}$. Fix $a>1$ and an integer $0\leq B<\infty$. We use $z=\lambda\log x$ for the formula for $R_B$ and $z=\frac{\log x}{\log_2^2  x}$ for the formula for $S_B$. Let $I_a(x)=[z,z^a]$. Define
$$\mathcal{U}_B = \{ u : u \textrm{ is a positive square-free integer consisted of exactly $B$ prime divisors in }I_a(x)\}.$$
Then we have
$$R_B:=\sum_{u\in \mathcal{U}_B} R_{u,z}(x)=\frac{(2\log a)^B}{B!}R_z(x)\left(1+O\left(\frac1{\log z}\right)\right),$$
and
$$S_B:=\sum_{u\in \mathcal{U}_B} S_{u,z}(x)=\frac{(2\log a)^B}{B!}S_z(x)\left(1+O\left(\frac1{\log z}\right)\right).$$
\end{lemma}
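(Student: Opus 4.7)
The plan is to apply Lemma 2.2 termwise to each $u\in\mathcal{U}_B$ and then reduce the whole statement to a Mertens-type evaluation of $\sum_{u\in\mathcal{U}_B}\tau(u)/u$. Since $B$ is fixed and every $u\in\mathcal{U}_B$ is a squarefree product of $B$ primes from $I_a(x)=[z,z^a]$, we have $p(u)>z$, $\tau(u)=2^B< A_1$ (for $A_1>2^B$), and $u\le z^{aB}\le(\lambda\log x)^{aB}<\log^{A_1}x$ (for $A_1$ chosen large enough depending on $a,B,\lambda$). Hence Lemma 2.2(8) applies uniformly, giving
\[
R_{u,z}(x)=\frac{\tau(u)}{u}R_z(x)\Bigl(1+O\Bigl(\frac1{\log z}\Bigr)\Bigr)=\frac{2^B}{u}R_z(x)\Bigl(1+O\Bigl(\frac1{\log z}\Bigr)\Bigr).
\]
Summing over $u\in\mathcal{U}_B$ reduces the task to evaluating $\sum_{u\in\mathcal{U}_B}1/u$.

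Next I would evaluate $\sum_{u\in\mathcal{U}_B}1/u$ by a symmetrization argument. Let $T=\sum_{z<q\le z^a}1/q$ and $T_2=\sum_{z<q\le z^a}1/q^2$. Since ordered $B$-tuples $(q_1,\dots,q_B)$ of primes in $I_a(x)$ with all $q_i$ distinct correspond $B!$-to-one to the elements of $\mathcal{U}_B$, while the unrestricted expansion of $T^B$ includes repetitions, we get
\[
B!\sum_{u\in\mathcal{U}_B}\frac1u=T^B-\Sigma_{\mathrm{rep}},
\]
where $\Sigma_{\mathrm{rep}}$ is a sum of finitely many (depending on $B$) products that each contain at least one factor $1/q_i^2$, and therefore $\Sigma_{\mathrm{rep}}\ll T_2\cdot T^{B-2}$. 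By Mertens' theorem, $T=\log a+O(1/\log z)$, and $T_2\ll 1/(z\log z)$. Thus
\[
\sum_{u\in\mathcal{U}_B}\frac1u=\frac{(\log a)^B}{B!}\Bigl(1+O\Bigl(\frac1{\log z}\Bigr)\Bigr),
\]
and multiplying by $2^B R_z(x)$ produces the claimed formula for $R_B$.

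The formula for $S_B$ follows by the identical argument, invoking Lemma 2.2(9) in place of (8); the hypothesis $z\le(\log x)/\log_2^2 x$ ensures the validity of (9) uniformly over $\mathcal{U}_B$.

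The only nontrivial points are bookkeeping: first, verifying that the implicit constant in the error term from Lemma 2.2 does not depend on the particular $u\in\mathcal{U}_B$ (it depends only on $A,A_1,\lambda$, which are fixed once $a,B$ are fixed), so that summing over the $\ll \pi(z^a)^B$ elements of $\mathcal{U}_B$ is legitimate; and second, checking that the error $O((\log z)^{-1})$ absorbs both the error from Lemma 2.2 and the error $T_2\cdot T^{B-2}$ from the combinatorial expansion. Neither is really an obstacle since $B$ and $a$ are fixed and $T_2=o(1/\log z)$; the bound from Lemma 2.2 is the dominant error. The main substantive input is Lemma 2.2, and Lemma 2.3 is essentially its averaged form over squarefree $B$-almost primes in $I_a(x)$.
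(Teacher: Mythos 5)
Your proof is correct and follows essentially the same route as the paper: apply the termwise asymptotic $R_{u,z}(x)=\frac{\tau(u)}{u}R_z(x)\bigl(1+O(1/\log z)\bigr)$ from Lemma 2.3(8) (which you mislabel as Lemma 2.2(8) but clearly mean the $u$-version), then evaluate $\sum_{u\in\mathcal{U}_B}\tau(u)/u$ by expanding $\bigl(\sum_{z<q\le z^a}2/q\bigr)^B$ and bounding the repeated-prime contribution by $T_2 T^{B-2}$, finishing with Mertens. The paper's proof is exactly this multinomial expansion (with an $O(1/z)$ bound for the repeats, which like your $O(1/(z\log z))$ is dominated by the $O(1/\log z)$ already present), so the two arguments differ only in notation and small bookkeeping.
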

\begin{proof}
We apply Lemma 2.3 with $u\in \mathcal{U}_B$. Note that $u\in \mathcal{U}_B$ satisfies the conditions for $u$ in Lemma 2.3(8), (9). Then,
\begin{align*}
\sum_{u\in \mathcal{U}_B} R_{u,z}(x)&=\sum_{u\in\mathcal{U}_B} \frac{\tau(u)}u R_z(x)\left(1+O\left(\frac1{\log z}\right)\right)\\
&= \left( \frac1{B!}\left(\sum_{p\in I_a(x)} \frac2p\right)^B+O\left(\frac1{(B-2)!}\left(\sum_{p\in I_a(x)}\frac4{p^2}\right)\left(\sum_{p\in I_a(x)} \frac2p\right)^{B-2}\right)\right)R_z(x)\left(1+O\left(\frac1{\log z}\right)\right)\\
&=\left( \frac1{B!}\left(\sum_{p\in I_a(x)} \frac2p\right)^B+O\left(\frac1z\right)\right)R_z(x)\left(1+O\left(\frac1{\log z}\right)\right)\\
&=\frac{2^B}{B!}\left(\log\log z^a - \log\log z + O\left(\frac1{\log z}\right)\right)^BR_z(x)\left(1+O\left(\frac1{\log z}\right)\right)\\
&=\frac{(2\log a)^B}{B!}R_z(x)\left(1+O\left(\frac1{\log z}\right)\right).\end{align*}
The result for $S_B$ can be obtained similarly.
\end{proof}
Although we relaxed $z\leq \frac{\sqrt{\log x}}{\log_2^6 x}$ to $z\leq \frac{\log x}{\log_2^2 x}$, the range is still not enough for further use. We will see how this range can be relaxed to $\log^{\frac1A} x<z\leq \log^A x$ in Lemma 2.5. A probability mass function of a Poisson distribution comes up as certain densities.
\begin{lemma}
Let $0<\lambda<\frac1{10}$. Fix $a>1$ and an integer $0\leq B<\infty$. We use $z=\lambda\log x$ for the formula for $R'_B$ and $z=\frac{\log x}{\log_2^2  x}$ for the formula for $S'_B$. Let $I_a(x)=(z,z^a]$. Define
$$\tau_{z,z^a} (n) = \prod_{\substack{{p^e||n}\\{p\in I_a(x)}}} \tau(p^e), \ \ w_{z,z^a}(n)=|\{p|n \  : \ p\in I_a(x)\}|,$$
and
$$
R'_B: =\sum_{\substack{{p\leq x}\\{w_{z,z^a}(p-1)=B}}} \tau_z(p-1), \ \ S'_B: =\sum_{\substack{{p\leq x}\\{w_{z,z^a}(p-1)=B}}} \frac{\tau_z(p-1)}p.$$
Then as $x\rightarrow\infty$, we have
\begin{equation}
R'_B=\frac{(2\log a)^B}{B!a^2}   R_z(x)(1+o(1)), \ \ S'_B=\frac{(2\log a)^B}{B!a^2}  S_z(x)(1+o(1)),
\end{equation}
and we have
\begin{equation}
R_{z^a}(x)=\frac1a R_z(x) (1+o(1)), \ \ S_{z^a}(x)=\frac1a S_z(x) (1+o(1)).
\end{equation}
\end{lemma}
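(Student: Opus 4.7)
The plan is to derive (10) from Lemma 2.4 by binomial-moment inversion, and then to derive (11) from (10) by decomposing $R_{z^a}(x)$ prime-by-prime according to the factorization of $p-1$ across $I_a(x)$. For (10), I would first reinterpret the double sum in Lemma 2.4 by exchanging the order of summation: since each squarefree $u \in \mathcal{U}_k$ dividing $p-1$ selects $k$ of the $w_{z,z^a}(p-1)$ prime factors of $p-1$ in $I_a(x)$,
$$R_k = \sum_{u \in \mathcal{U}_k} R_{u,z}(x) = \sum_{p \le x} \tau_z(p-1) \binom{w_{z,z^a}(p-1)}{k}.$$
The combinatorial identity $\mathbf{1}_{w=B} = \sum_{k \ge B}(-1)^{k-B}\binom{k}{B}\binom{w}{k}$, which follows from $\binom{k}{B}\binom{w}{k}=\binom{w}{B}\binom{w-B}{k-B}$ and the vanishing of $(1-1)^{w-B}$ for $w>B$, has partial sums that satisfy the sharp Bonferroni bounds; summing against $\tau_z(p-1)$ gives
$$\sum_{k=B}^{B+2L+1}(-1)^{k-B}\binom{k}{B}R_k \le R'_B \le \sum_{k=B}^{B+2L}(-1)^{k-B}\binom{k}{B}R_k.$$
Inserting Lemma 2.4 termwise, both bounds reduce to $\frac{(2\log a)^B}{B!}R_z(x)\sum_{j=0}^{2L\text{ or }2L+1}\frac{(-2\log a)^j}{j!}(1+o(1))$; the truncated inner sum tends to $e^{-2\log a}=1/a^2$ as $L\to\infty$, so fixing $L$ large first and then sending $x\to\infty$ yields (10). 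The $S'_B$ formula follows by the same argument applied to the $S$-part of Lemma 2.4.

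For (11) I would use $\tau_{z^a}(p-1) = \tau_z(p-1)/\tau_{z,z^a}(p-1)$ and call $p$ \emph{typical} if every $q \in I_a(x)$ dividing $p-1$ does so only to the first power, so that $\tau_{z,z^a}(p-1) = 2^{w_{z,z^a}(p-1)}$. Atypical $p$ have $q^2 \mid p-1$ for some $q \in I_a(x)$, and by Lemma 2.3(7) applied with $u=q^2$ their contribution is bounded by
$$\sum_{q \in I_a(x)} R_{q^2, z}(x) \ll x \sum_{q > z} \frac{1}{q^2} \ll \frac{x}{z \log z} = o(R_z(x)).$$
For typical $p$ with $w_{z,z^a}(p-1) = B$ one has $\tau_{z^a}(p-1) = \tau_z(p-1)/2^B$, so summing over all $p$ and all $B$ yields
$$R_{z^a}(x) = \sum_{B \ge 0} \frac{R'_B}{2^B} + o(R_z(x)).$$
Applying (10) to each term and summing the absolutely convergent series gives
$$\sum_{B \ge 0} \frac{R'_B}{2^B} = \frac{R_z(x)}{a^2} \sum_{B \ge 0} \frac{(\log a)^B}{B!} (1+o(1)) = \frac{R_z(x)}{a}(1+o(1)),$$
and the $S$-version follows identically.

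The main obstacle is uniformity in $B$: both (10) and Lemma 2.4 are proved for fixed $B$, with implicit constants that grow with $B$ (through $A_1 \sim 2^B$ in Lemma 2.3 and $A \sim aB$ in the Bombieri--Vinogradov input of Lemma 2.1), so the infinite sum in the derivation of (11) is not immediately legitimate. To close the gap I would truncate at $B \le K(x)$ with $K(x) \to \infty$ slowly enough that (10) remains effective in that range, and control the tail $B > K(x)$ via the crude upper bound
$$R'_B \le R_B \le \sum_{u \in \mathcal{U}_B} R_{u,z}(x) \ll x \sum_{u \in \mathcal{U}_B} \frac{\tau(u)}{\phi(u)} \ll \frac{x(2\log a + o(1))^B}{B!},$$
which follows from Lemma 2.3(7) (this bound does not require the $\tau(u) < A_1$ hypothesis) combined with Mertens' theorem. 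The super-geometric decay of this tail against $R_z(x) \asymp x/\log z$ then renders the contribution from $B > K(x)$ negligible for any sufficiently slowly growing $K(x) \to \infty$.
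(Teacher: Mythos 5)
Your proposal is correct and follows essentially the same route as the paper: for (10), the Bonferroni truncation of the inclusion--exclusion identity $R'_B = \sum_{j\ge 0}(-1)^j\binom{B+j}{j}R_{B+j}$ is exactly what the paper does, and your reinterpretation of $R_k$ as $\sum_p \tau_z(p-1)\binom{w_{z,z^a}(p-1)}{k}$ simply makes the combinatorics behind that identity explicit. For (11), the paper handles the $B$-uniformity issue you flagged by keeping $B$ fixed, passing to $\liminf$ and $\limsup$ of $R_{z^a}(x)/R_z(x)$ with the same crude tail bound $R_B/2^B$, and only then sending $B\to\infty$; your slowly growing cutoff $K(x)$ plus the tail estimate $R'_B\le R_B\ll x(2\log a+o(1))^B/B!$ from Lemma 2.3(7) is a morally equivalent diagonalization.
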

\begin{proof}[Proof of (10)]
We remark that by (7), (8), (9), the contribution of primes $p$ such that $p-1$ is divisible by a square of a prime $q>z$ is negligible. In fact, those contributions to $R_z(x)$ and $S_z(x)$ are $O(R_z(x)/z)$ and $O(S_z(x)/z)$ respectively. Thus, we assume that $p-1$ is not divisible by square of any prime $q>z$. By Lemma 2.4 and inclusion-exclusion principle,
$$
R'_B= R_B - \binom {B+1}1 R_{B+1} + \binom {B+2}2 R_{B+2} - \binom {B+3}3 R_{B+3} +-\cdots.$$
Moreover, for any $k\geq 1$,
$$
\sum_{j=0}^{2k-1} (-1)^j \binom {B+j}j R_{B+j} \leq R'_B \leq \sum_{j=0}^{2k} (-1)^j\binom{B+j}jR_{B+j}.$$
Then dividing by $R_z(x)$ gives
$$
\sum_{j=0}^{2k-1} (-1)^j \binom {B+j}j \frac{R_{B+j}}{R_z(x)} \leq \frac{R'_B}{R_z(x)} \leq \sum_{j=0}^{2k} (-1)^j\binom{B+j}j\frac{R_{B+j}}{R_z(x)}.$$
By Lemma 2.4, we have
$$
\frac{(2\log a)^B}{B!}\sum_{j=0}^{2k-1} (-1)^j \frac{(2\log a)^j}{j!} \left(1+O\left(\frac1{\log z}\right)\right)\leq \frac{R'_B}{R_z(x)}\leq \frac{(2\log a)^B}{B!}\sum_{j=0}^{2k} (-1)^j \frac{(2\log a)^j}{j!} \left(1+O\left(\frac1{\log z}\right)\right).$$
Taking $x\rightarrow\infty$, we have
$$
\frac{(2\log a)^B}{B!}\sum_{j=0}^{2k-1} (-1)^j \frac{(2\log a)^j}{j!} \leq \liminf_{x\rightarrow\infty} \frac{R'_B}{R_z(x)}\leq \limsup_{x\rightarrow\infty}  \frac{R'_B}{R_z(x)}\leq\frac{(2\log a)^B}{B!}\sum_{j=0}^{2k } (-1)^j \frac{(2\log a)^j}{j!}.$$
Letting $k\rightarrow\infty$, we obtain
$$\lim_{x\rightarrow\infty}\frac{R'_B}{R_z(x)}=\frac{(2\log a)^B}{B!a^2}.$$

The result for $S'_B$ can be obtained similarly.
\end{proof}
\begin{proof}[Proof of (11)]
As in the proof of (10), we assume that $p-1$ is not divisible by square of any prime $q>z$.
Note that $\tau_z(p-1)=\tau_{z^a}(p-1)\tau_{z,z^a}(p-1)$. Let $0\leq B<\infty$ be a fixed integer. If $w_{z,z^a}(p-1)=B$ then $\tau_{z,z^a}(p-1)=2^B$. Then we have by (10),
$$
\sum_{\substack{{p\leq x}\\{w_{z,z^a}(p-1)=B}}} \tau_{z^a}(p-1) =\sum_{\substack{{p\leq x}\\{w_{z,z^a}(p-1)=B}}} \frac{\tau_z(p-1)}{2^B}=\frac{R'_B}{2^B}=\frac{(\log a)^B}{B!a^2}   R_z(x)(1+o(1))  .$$
Then by Lemma 2.4,
\begin{align*}
\frac{R_{z^a}(x)}{R_z(x)} &= \sum_{j< B} \frac{(\log a)^j}{j!a^2} (1+o(1))+ \frac1{R_z(x)}\sum_{ j\geq B} \frac1{2^j}\sum_{\substack{{p\leq x}\\{w_{z,z^a}(p-1)=j}}} \tau_{z }(p-1)\\
&=\sum_{j<B} \frac{(\log a)^j}{j!a^2} (1+o(1))+O\left(\frac1{2^BR_z(x)} \sum_{\substack{{p\leq x}\\{w_{z,z^a}(p-1)\geq B}}} \tau_{z }(p-1)  \right)\\
&=\sum_{j< B} \frac{(\log a)^j}{j!a^2} (1+o(1))+O\left(\frac{R_B}{2^BR_z(x)}  \right)\\
&=\sum_{j< B} \frac{(\log a)^j}{j!a^2} (1+o(1))+O\left(  \frac{(2\log a)^B}{2^BB!}\left(1+O\left(\frac1{\log z}\right)\right)\right).
\end{align*}
Thus, both $\liminf\limits_{x\rightarrow\infty} \frac{R_{z^a}(x)}{R_z(x)}$ and $\limsup\limits_{x\rightarrow\infty} \frac{R_{z^a}(x)}{R_z(x)}$ are
$$
\sum_{j\leq B} \frac{(\log a)^j}{j!a^2} + O\left(\frac{(\log a)^B}{B!}\right)
$$
and the constant implied in $O$ does not depend on $B$.
Therefore, letting $B\rightarrow \infty$, we obtain
$$
\lim_{x\rightarrow \infty} \frac{R_{z^a}(x)}{R_z(x)} = \frac1a.
$$
The result for $S_{z^a}(x)$ can be obtained similarly.
\end{proof}
Lemma 2.5 allows us to have an extended range of $z$, and the same method applied to $R_{u,z}(x)$, we can also extend range of $z$ for $R_{u,z}(x)$ and $S_{u,z}(x)$.
\begin{corr}
Fix any $A>1$. Let $\log^{\frac1A} x<z\leq \log^A x$. Then as $x\rightarrow\infty$, we have
\begin{equation}
R_z(x) = c_1 \frac{x}{\log z} (1+o(1)), \ \ S_z(x) = c_1 \frac{\log x}{\log z} (1+o(1)).
\end{equation}
Assume that $u$ is a positive integer with $p(u)>z$, $u< (\log x)^{A_1}$ and $\tau(u)<A_1$. Then as $x\rightarrow\infty$, we have
\begin{equation}
R_{u,z}(x) = \frac{\tau(u)}u R_z(x)(1+o(1)), \ \ S_{u,z}(x) = \frac{\tau(u)}u S_z(x)(1+o(1)).
\end{equation}
\end{corr}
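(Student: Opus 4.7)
The plan is to bootstrap Lemma 2.5(11) from a pointwise statement (fixed $a$) to a uniform statement in $z$ by exploiting the fact that $z\mapsto R_z(x)$ and $z\mapsto S_z(x)$ are monotonically non-increasing: since $\tau(p^e)\geq 2$ for $e\geq 1$, enlarging $z$ only removes factors from $\tau_z(n)$, so $R_{z_1}(x)\geq R_{z_2}(x)$ whenever $z_1\leq z_2$, and similarly for $S$ and for the $u$-restricted sums.

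The lower part of the new range is already in hand: if $z\in[\log^{1/A}x,\lambda\log x]$ then Lemma 2.2(5) gives $R_z(x)=c_1 x/\log z+O(x/\log^2 z)$, and the error term is $o(x/\log z)$ since $\log z\geq (\log_2 x)/A\to\infty$; the analogous remark handles $S_z(x)$ on $[\log^{1/A}x,\log x/\log_2^2 x]$. So I need only handle the upper part, say $z\in(\lambda\log x,\log^A x]$ for $R_z$. Setting $z_0:=\lambda\log x$, every such $z$ can be written $z=z_0^{a(z)}$ with $a(z)=\log z/\log z_0\in(1,A+o(1)]$. Now fix $\delta>0$ and pick finitely many fixed values $1=a_0<a_1<\cdots<a_N=A+1$ with $a_{j+1}/a_j\leq 1+\delta$. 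By Lemma 2.5(11) applied at the base point $z_0$, for each $j$,
\[
R_{z_0^{a_j}}(x)=\frac{c_1 x}{a_j\log z_0}(1+o_j(1)),
\]
with the $o_j(1)$ uniform in $j$ since there are only finitely many values. For any $z$ in the upper range, pick $j$ with $a(z)\in[a_j,a_{j+1}]$; then $z_0^{a_j}\leq z\leq z_0^{a_{j+1}}$, so monotonicity combined with the above and $\log z\in[a_j\log z_0,a_{j+1}\log z_0]$ yields
\[
\tfrac{1}{1+\delta}(1+o(1))\leq \frac{R_z(x)\log z}{c_1 x}\leq (1+\delta)(1+o(1))
\]
uniformly in $z$. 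Letting $\delta\to 0$ establishes (12) for $R_z$; the $S_z$ case is identical with $z_0=\log x/\log_2^2 x$.

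For (13), the same strategy will work once I prove the $u$-analog of Lemma 2.5(11), namely $R_{u,z^a}(x)=(1/a)R_{u,z}(x)(1+o(1))$ and similarly for $S_{u,z}$, for each fixed $a>1$ and $u$ satisfying the stated bounds. I would rerun the proof of Lemmas 2.4--2.5 with the congruence $p\equiv 1\pmod u$ built in: Lemma 2.3(8),(9) supplies the base case, and the sum $\sum_{v\in\mathcal{U}_B}R_{[u,v],z}(x)$ replaces $R_B$, with $\tau([u,v])/[u,v]=(\tau(u)\tau(v)/(uv))(1+O(1/z))$ since $u$ has only $O(1)$ prime divisors and at most that many can collide with $v$. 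The inclusion-exclusion then proceeds identically and yields the $u$-analog of (11); monotonicity of $z\mapsto R_{u,z}(x)$ (for $u$ fixed) together with the same dyadic sandwich argument gives (13). The hard part will be the clean factorization $\tau([u,v])/[u,v]\approx \tau(u)\tau(v)/(uv)$: since the primes of $u$ are $>z$ but may lie inside $I_a(x)=(z,z^a]$, one must separate off the (bounded in number) coinciding primes, whose contribution is absorbed into the $o(1)$ error.
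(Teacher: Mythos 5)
The paper gives no explicit proof of Corollary 2.1 — it only remarks that ``Lemma 2.5 allows us to have an extended range of $z$, and the same method applied to $R_{u,z}(x)$\ldots'' — so you have to supply the argument yourself, and you do so correctly for (12): the lower part of the range is Lemma 2.2, the upper part is Lemma 2.5(11) applied from a fixed base point, and the dyadic sandwich with a finite net of exponents $a_j$ is precisely what is needed to pass from the fixed-$a$ statement of Lemma 2.5(11) to a statement that holds with a single $o(1)$ as both $z$ and $z_0=\lambda\log x$ drift with $x$ (so $a(z)=\log z/\log z_0$ is itself $x$-dependent). This step is genuinely missing from the paper's one-line gloss, and your monotonicity observation ($\tau_z(n)$ non-increasing in $z$, hence so are $R_z(x)$ and $S_z(x)$) is the right mechanism for the sandwich. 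So (12) is solid.

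Your worry at the end about (13) is, however, a phantom. You write that ``the primes of $u$ are $>z$ but may lie inside $I_a(x)=(z,z^a]$'' and flag the resulting collisions in $\tau([u,v])/[u,v]$ as the hard part. But keep the two roles of the symbol $z$ separate: the $z$ appearing in the hypothesis $p(u)>z$ of Corollary 2.1(13) is the \emph{target} cutoff, which in your bootstrapping notation is $z_0^{a}$, while $I_a(x)=(z_0,z_0^{a}]$ is built from the \emph{base} $z_0$. Since $p(u)>z_0^{a}$, every prime of $u$ lies strictly above the interval $(z_0,z_0^{a}]$, whereas every $v\in\mathcal U_B$ is supported on primes inside $(z_0,z_0^{a}]$; hence $(u,v)=1$ identically, $[u,v]=uv$, and $\tau([u,v])/[u,v]=\tau(u)\tau(v)/(uv)$ exactly, with no error term to absorb. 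Consequently the $u$-analogue of Lemma 2.4 reads $\sum_{v\in\mathcal U_B}R_{[u,v],z_0}(x)=\frac{\tau(u)}{u}\,\frac{(2\log a)^B}{B!}R_{z_0}(x)\left(1+O(1/\log z_0)\right)$ by Lemma 2.3(8) applied to the modulus $uv$ (which still satisfies $p(uv)>z_0$, $uv<(\log x)^{A_1'}$, $\tau(uv)<A_1'$ for a suitable larger $A_1'$), and the inclusion--exclusion of Lemma 2.5 goes through verbatim to give $R_{u,z_0^{a}}(x)=\frac1a R_{u,z_0}(x)(1+o(1))$. Your dyadic sandwich then yields (13) just as it did (12). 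So the plan is correct; you should simply delete the ``hard part'' caveat, because the coincidence of primes you fear cannot occur under the hypothesis $p(u)>z$ once $z$ is interpreted as the target cutoff.
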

We apply Corollary 2.1 to obtain the following uniform distribution result:
\begin{corr}
Let $2\leq v\leq x$ and $r:={(v^{\frac32}\log v)}^{-1}$.
Suppose also that $r\geq \log^{-\frac45} x$, $0\leq \alpha \leq \beta\leq 1$, and $\beta-\alpha\geq r$. Then for $z\leq\frac{\log x^r}{\log_2^2 x^r}$,
\begin{equation}
\sum_{\alpha \leq \frac{\log p}{\log x} < \beta} \frac{\tau_z(p-1)}p=(\beta-\alpha)S_z(x)\left(1+O\left(\frac1{\log z}\right)\right).
\end{equation}
For $\log^{\frac1A} x<z\leq \log^A x$, we have as $x\rightarrow\infty$,
\begin{equation}
\sum_{\alpha \leq \frac{\log p}{\log x} < \beta} \frac{\tau_z(p-1)}p=(\beta-\alpha)S_z(x)\left(1+o(1)\right).
\end{equation}
Assume that $u$ is a positive integer with $p(u)>z$, $u< (\log x)^{A_1}$ and $\tau(u)<A_1$. Then we have for $z\leq\frac{\log x^r}{\log_2^2 x^r}$,
\begin{equation}
\sum_{\substack{{\alpha \leq \frac{\log p}{\log x} < \beta} \\{p\equiv 1\ \mathrm{ mod } \ u}}} \frac{\tau_z(p-1)}p=(\beta-\alpha)\frac{\tau(u)}uS_z(x)\left(1+O\left(\frac1{\log z}\right)\right).
\end{equation}
and for $\log^{\frac1A} x<z\leq \log^A x$, we have as $x\rightarrow\infty$,
\begin{equation}
\sum_{\substack{{\alpha \leq \frac{\log p}{\log x} < \beta} \\{p\equiv 1\ \mathrm{ mod } \ u}}} \frac{\tau_z(p-1)}p=(\beta-\alpha)\frac{\tau(u)}uS_z(x)\left(1+o(1)\right).
\end{equation}

\end{corr}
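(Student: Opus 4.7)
The plan is to derive (14) by Abel summation directly from Lemma 2.2(5), to obtain (16) by running the same argument with Lemma 2.3(8) replacing (5), and then to deduce (15) and (17) by repeating these two proofs with Corollary 2.1 replacing the corresponding lemmas; this widens the admissible range of $z$ at the cost of weakening the relative error from $O(1/\log z)$ to $o(1)$.

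For (14), Abel summation gives
\begin{equation*}
\sum_{x^\alpha \le p < x^\beta} \frac{\tau_z(p-1)}{p} = \frac{R_z(x^\beta)}{x^\beta} - \frac{R_z(x^\alpha)}{x^\alpha} + \int_{x^\alpha}^{x^\beta}\frac{R_z(t)}{t^2}\,dt,
\end{equation*}
so the problem reduces to the integral. I split it at the threshold $t_0 = e^{z/\lambda}$ above which Lemma 2.2(5) applies. On $[\max(x^\alpha, t_0),\,x^\beta]$ the integrand equals $c_1/\log z + O(1/\log^2 z)$ and contributes $c_1(\beta-\alpha)\log x/\log z + O((\beta-\alpha)\log x/\log^2 z)$ once the truncation slippage (present only when $x^\alpha < t_0$ and of size $\ll z/\log z$) is folded in. On $[x^\alpha, \min(x^\beta, t_0)]$ the trivial bound $R_z(t) \ll t$ produces only $O(z)$, and the boundary terms are $O(1/\log z)$ at $t=x^\beta$ via (5) and $O(1)$ at $t=x^\alpha$. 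Assembling the pieces yields the announced formula provided these auxiliary errors can be swallowed by $O((\beta-\alpha)\log x/\log^2 z)$.

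For (16), I run the identical partial summation with $R_{u,z}(t)$ in place of $R_z(t)$: Lemma 2.3(8) extracts the factor $\tau(u)/u$ on the high-$t$ segment, while Lemma 2.3(7) combined with $p(u)>z$ gives $R_{u,z}(t) \ll (\tau(u)/u)\,t$, absorbing the low-$t$ segment cleanly into the $\tau(u)/u$ prefactor. For (15) and (17), I repeat the corresponding arguments but with Corollary 2.1 replacing Lemmas 2.2(5) and 2.3(8): taking the internal parameter of Corollary 2.1 to be at least $5A$ ensures that the hypothesis $\beta \ge r \ge \log^{-4/5} x$ keeps every $t \in [x^r, x^\beta]$ inside its admissible range, so that $R_z(t) = c_1 t/\log z \,(1+o(1))$ (respectively for $R_{u,z}$) holds uniformly there. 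The relative error then degrades from $O(1/\log z)$ to $o(1)$.

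The main obstacle is absorbing the auxiliary contributions $O(z)$, $O(z/\log z)$, $O(1)$, and $O(1/\log z)$ into $O((\beta-\alpha)\log x/\log^2 z)$, which reduces to the inequality $z\log^2 z = O((\beta-\alpha)\log x)$. All three hypotheses interact here: $z \le \log x^r/\log_2^2 x^r$ forces $\log z \le \log_2 x^r + O(1)$, so that $z\log^2 z \le (r\log x/\log_2^2 x^r)\log_2^2 x^r = r\log x$; the assumption $\beta-\alpha \ge r$ upgrades this to $(\beta-\alpha)\log x$; and the assumption $r \ge \log^{-4/5} x$ guarantees $\log_2 x^r \to \infty$, legitimizing the preceding bound on $\log z$.
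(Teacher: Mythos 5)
Your proposal is correct and follows essentially the same route as the paper: partial summation to reduce to $\int_{x^\alpha}^{x^\beta} R_z(t)/t^2\,dt$ plus boundary terms, splitting the integral at $t_0=e^{z/\lambda}$ exactly as in the paper's proof of Lemma 2.2(6), invoking Lemma 2.2(5) (resp.\ Lemma 2.3(8), resp.\ Corollary 2.1) on the high range and a trivial bound below, then absorbing the leftover $O(z)$ into $O((\beta-\alpha)\log x/\log^2 z)$ via $z\log^2 z\ll r\log x\leq(\beta-\alpha)\log x$. Your write-up is in fact a bit more explicit than the paper's about where the hypotheses $\beta-\alpha\geq r$, $r\geq\log^{-4/5}x$, and $z\leq\log x^r/\log_2^2 x^r$ each enter (and a small slip aside, ``the integrand'' should read $R_z(t)/t$ rather than $R_z(t)/t^2$ in that sentence, though the stated contribution is right).
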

\begin{proof}
By Lemma 2.2(5) and partial summation, we have for $\beta-\alpha\geq r$,
\begin{align*}
\sum_{\alpha \leq \frac{\log p}{\log x} < \beta} \frac{\tau_z(p-1)}p&=\frac{R_z(t)}t|_{x^{\alpha}}^{x^{\beta}}+\int_{x^{\alpha}}^{x^{\beta}} \frac{R_z(t)}{t^2}dt\\
&=c_1(\beta-\alpha)\frac{\log x}{\log z}\left(1+O\left(\frac1{\log z}\right)\right) +O\left(\frac1{\log^2 z}\right).
\end{align*}
Clearly, $r\log x \gg 1$. Thus, the second $O$-term can be included in the first $O$-term. Then (14) follows.

Since $r\log x \geq \log^{\frac15} x$, the range $\log^{\frac1A} x<z\leq \log^A x$ can be obtained from taking powers of $\frac{\log x^r}{\log_2^2 x^r}$. We have by (12), as $x\rightarrow\infty$,
\begin{align*}
\sum_{\alpha \leq \frac{\log p}{\log x} < \beta} \frac{\tau_z(p-1)}p&=\frac{R_z(t)}t|_{x^{\alpha}}^{x^{\beta}}+\int_{x^{\alpha}}^{x^{\beta}} \frac{R_z(t)}{t^2}dt\\
&=c_1(\beta-\alpha)\frac{\log x}{\log z}\left(1+o(1)\right) +o\left(\frac1{\log z}\right).
\end{align*}
Also, by $r\log x \gg 1$, the second $o$-term can be included in the first $o$-term.
Therefore, (15) follows. Similarly, (16) follows from Lemma 2.3(8) and (17) follows from (13).
\end{proof}
We use $p_1$, $p_2$, $\ldots$ , $p_v$ to denote prime numbers. We define the following multiple sums for $2\leq v\leq x$:
$$
\mathfrak{T}_{v,z}(x):=\sum_{p_1p_2\cdots p_v\leq x} \frac{\tau_z(p_1-1)\tau_z(p_2-1)\cdots\tau_z(p_v-1)}{p_1p_2\cdots p_v},
$$
and for $\mathbf{u}=(u_1,\ldots, u_v)$ with $1\leq u_i\leq x$,
$$
\mathfrak{T}_{\mathbf{u},v,z}(x):=\sum_{\substack{{p_1p_2\cdots p_v\leq x}\\{\forall_i, \ p_i \equiv 1 \ \mathrm{ mod } \ u_i}}} \frac{\tau_z(p_1-1)\tau_z(p_2-1)\cdots\tau_z(p_v-1)}{p_1p_2\cdots p_v},
$$
Define $\mathbb{T}_v:=\{(t_1,\ldots, t_v) : \forall_i, \ t_i\in [0,1], \ t_1+\cdots + t_v \leq 1 \}$. We adopt the idea from Gauss' Circle Problem. Recall that $r=(v^{\frac 32}\log v)^{-1}$. Consider a covering of $\mathbb{T}_v$ by $v$-cubes of side-length $r$ of the form:

Let $s_1, \ldots, s_v$ be nonnegative integers, let
$$
B_{s_1, \ldots, s_v}:=\{(t_1,\ldots , t_v) : \forall_i, \ r s_i\leq t_i< r (s_i+1)  \}.$$
Let $M_v$ be the set of those $v$-cubes lying completely inside $\mathbb{T}_v$. Then the sum $\mathfrak{T}_{v,z}(x)$ is over the primes satisfying:
$$
\left(\frac{\log p_1}{\log x}, \ldots, \frac{\log p_v}{\log x}\right)\in \mathbb{T}_v.$$
Instead of the whole $\mathbb{T}_v$, we consider the contribution of the sum over primes satisfying:
$$
\left(\frac{\log p_1}{\log x}, \ldots, \frac{\log p_v}{\log x}\right)\in \cup M_v,
$$
which come from the $v$-cubes lying completely inside $\mathbb{T}_v$. We define
$$
 {\mathfrak{S}_{v,z}}(x):=\sum_{\left(\frac{\log p_1}{\log x}, \ldots, \frac{\log p_v}{\log x}\right)\in \cup M_v} \frac{\tau_z(p_1-1)\tau_z(p_2-1)\cdots\tau_z(p_v-1)}{p_1p_2\cdots p_v},
$$
and similarly for $\mathbf{u}=(u_1,\cdots, u_v)$ with $1\leq u_i\leq x$,
$$
 {\mathfrak{S}_{\mathbf{u},v,z}}(x):=\sum_{\substack{{\left(\frac{\log p_1}{\log x}, \ldots, \frac{\log p_v}{\log x}\right)\in \cup M_v}\\{\forall_i, \ p_i \equiv 1 \ \mathrm{ mod } \ u_i}}} \frac{\tau_z(p_1-1)\tau_z(p_2-1)\cdots\tau_z(p_v-1)}{p_1p_2\cdots p_v},
$$
Let $v=\left\lfloor c \sqrt{\frac{\log x}{\log_2 x}}\right\rfloor$ for some positive constant $c$ to be determined. Then $v$ satisfies the conditions in Corollary 2.2. Then we have:
\begin{lemma}
Let $\log^{\frac1A} x< z \leq \log^A x$, then as $x\rightarrow\infty$,
\begin{equation}
 {\mathfrak{S}_{v,z}}(x)=\frac1{v!}S_z(x)^v(1+o(1))^v.
\end{equation}
For $\mathbf{u}=(u_1, u_2, 1, \ldots, 1)$ with $1\leq u_i\leq x$,
\begin{equation}
 {\mathfrak{S}_{\mathbf{u},v,z}}(x) \ll \frac{\tau(u_1)\tau(u_2)}{\phi(u_1)\phi(u_2)} {\mathfrak{S}_{v,z}}(x)\log^k z,
\end{equation}
where $0\leq k\leq 2$ is the number of $u_i$'s that are not $1$.

Assume that each $u_i$, $i=1, 2$  is a  positive integer with $p(u_i)>z$, $u_i< (\log x)^{A_1}$ and $\tau(u_i)<A_1$. Then as $x\rightarrow\infty$, we have
\begin{equation}
 {\mathfrak{S}_{\mathbf{u},v,z}}(x)=\frac{\tau(u_1)\tau(u_2)}{u_1u_2} {\mathfrak{S}_{v,z}}(x)\left(1+o(1)\right).\end{equation}
\end{lemma}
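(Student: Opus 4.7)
The plan is to exploit the product structure inside each cube $B_{s_1,\ldots,s_v}\in M_v$: once every $\log p_i/\log x$ is constrained to a one-dimensional slab of length $r$, the $v$ prime sums decouple as a product, and one can apply Corollary 2.2 coordinate-by-coordinate. After that, I sum over $\mathbf{s}\in M_v$ and analyze the resulting Riemann-sum-type approximation to the volume of $\mathbb{T}_v$.

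For a single cube in $M_v$ the contribution is
\[
\prod_{i=1}^v \Bigl(\sum_{rs_i\,\le\,\log p_i/\log x\,<\,r(s_i+1)} \frac{\tau_z(p_i-1)}{p_i}\Bigr).
\]
With $v=\lfloor c\sqrt{\log x/\log_2 x}\rfloor$ one checks $r=(v^{3/2}\log v)^{-1}\asymp(\log x)^{-3/4}(\log_2 x)^{-1/4}\gg\log^{-4/5}x$, so formula (15) applies uniformly in $\mathbf{s}$ and each factor equals $rS_z(x)(1+o(1))$. Hence every cube contributes $r^vS_z(x)^v(1+o(1))^v$. Summing gives $|M_v|\,r^vS_z(x)^v(1+o(1))^v$. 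A cube with lower corner $(rs_1,\ldots,rs_v)$ lies in $\mathbb{T}_v$ iff $\sum_i s_i\le 1/r-v$, so $\{t\ge 0:\sum_i t_i\le 1-vr\}\subseteq\bigcup M_v\subseteq\mathbb{T}_v$, and comparing volumes yields $(1-vr)^v/v!\le|M_v|r^v\le 1/v!$. Because $vr=(v^{1/2}\log v)^{-1}=o(1)$, both ends equal $\tfrac{1}{v!}(1+o(1))^v$, which proves (18).

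For (20), the same decomposition applies, but on the two coordinates $i=1,2$ I would use (17) rather than (15): its hypotheses $p(u_i)>z$, $u_i<(\log x)^{A_1}$, $\tau(u_i)<A_1$ are assumed and it makes the $i$-th factor equal to $r(\tau(u_i)/u_i)S_z(x)(1+o(1))$. The remaining $v-2$ factors are \emph{identical} to those in $\mathfrak{S}_{v,z}(x)$, so when one forms the ratio of the two cube-sums only the two modified factors survive; the resulting uniform error is $(1+o(1))^2=1+o(1)$ rather than $(1+o(1))^v$. Summing over $M_v$ then gives (20). For (19), where $u_1,u_2$ need not satisfy the nice hypotheses, I replace (17) by partial summation applied to Lemma 2.3(7):
\[
\sum_{\substack{x^\alpha\le p\le x^\beta\\ p\equiv 1\bmod u}}\frac{\tau_z(p-1)}{p}\ll \frac{\tau(u)}{\phi(u)}\bigl((\beta-\alpha)\log x+1\bigr)\ll \frac{\tau(u)}{\phi(u)}\,r\,S_z(x)\log z,
\]
using $r\log x\to\infty$ and $S_z(x)\asymp\log x/\log z$. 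Each of the (up to two) nontrivially restricted coordinates therefore picks up an extra factor $(\tau(u_i)/\phi(u_i))\log z$ relative to the unrestricted slab-sum, and multiplying and summing over $M_v$ delivers (19) with the stated $\log^k z$.

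The delicate point is the sandwich $(1-vr)^v/v!\le|M_v|r^v\le 1/v!$. The particular choice $r=(v^{3/2}\log v)^{-1}$ is precisely what makes $vr=o(1)$, so the boundary slack from the Gauss-circle argument is absorbed into $(1+o(1))^v$, while keeping $r\gg\log^{-4/5}x$, so Corollary 2.2 is applicable on each slab. No individual step is hard, but the exponent $v\to\infty$ makes tracking the uniformity of the $o(1)$ errors subtle; in particular, for (20) one must crucially use that only two of the $v$ factors actually change, otherwise the error would inflate to $(1+o(1))^v$.
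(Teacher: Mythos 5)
Your proof is correct and follows essentially the same route as the paper's Lemma 2.6: decouple each cube in $M_v$ into a product of $v$ one-dimensional slab sums, evaluate each via Corollary 2.2(15), bound $|M_v|$ by comparing volumes of dilates of $\mathbb{T}_v$, and for (19)/(20) observe that only the two constrained coordinates change the cube-sum, applying (17) or partial summation on Lemma 2.3(7) to those two factors so the error is $(1+o(1))^2$ rather than $(1+o(1))^v$. One small point in your favor: the paper's lower containment uses $(1-r\sqrt v)\mathbb{T}_v\subseteq\bigcup M_v$, whereas the correct statement (and the one you give) is $(1-rv)\mathbb{T}_v\subseteq\bigcup M_v$, since the far corner of a cube with lower corner summing to $\le 1-rv$ has coordinate sum $\le 1$; both $rv$ and $r\sqrt v$ are $o(1)$, so the final $(1+o(1))^v$ is unaffected.
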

\begin{proof}[Proof of (18)]
It is clear that
$$
\mathrm{vol}\left((1-r\sqrt{v})\mathbb{T}_v\right)\leq |M_v| \mathrm{vol} (B_{0, \ldots, 0}) \leq \mathrm{vol} (\mathbb{T}_v).
$$
We have $\mathrm{vol} (\mathbb{T}_v)=\frac1{v!}$, $\mathrm{vol} (B_{0, \ldots, 0})=r^v$, and $\mathrm{vol} \left((1- r\sqrt{v})\mathbb{T}_v\right)=\frac1{v!}\left(1- r\sqrt{v}\right)^v$. Also, recall that  $r:={(v^{\frac32}\log v)}^{-1}$. Then,
$$
\frac{\frac1{v!}\left(1-\frac1{v\log v}\right)^v}{(v^{\frac32}\log v)^{-v}}\leq |M_v|\leq \frac{\frac1{v!}}{(v^{\frac32}\log v)^{-v}}.
$$
On the other hand, by Corollary 2.2(15), the contribution of each $v$-cube $[\alpha_1,\beta_1]\times \cdots \times [\alpha_v, \beta_v]\subseteq [0,1]^v$ of side-length $r$ to the sum is
$$
\sum_{\forall_i, \ \alpha_i\leq \frac{\log p_i}{\log x}<\beta_i} \frac{\tau_z(p_1-1)\tau_z(p_2-1)\cdots\tau_z(p_v-1)}{p_1p_2\cdots p_v}=\left(\prod_{i=1}^v (\beta_i-\alpha_i) \right) S_z(x)^v ( 1+o(1))^v=r^v S_z(x)^v(1+o(1))^v.
$$
Combining this with the bounds for $|M_v|$, we obtain the result.
\end{proof}
\begin{proof}[Proof of (19), (20)]
Let $v$ and $r$ be as defined in Corollary 2.2. We write (15) and (17) in the form of
\begin{equation}
\sum_{\alpha \leq \frac{\log p}{\log x} < \beta} \frac{\tau_z(p-1)}p=(\beta-\alpha)S_z(x)\left(1+f_{\alpha,\beta}(x)\right),
\end{equation}
and
\begin{equation}
\sum_{\substack{{\alpha \leq \frac{\log p}{\log x} < \beta} \\{p\equiv 1\ \mathrm{ mod } \ u}}} \frac{\tau_z(p-1)}p=(\beta-\alpha)\frac{\tau(u)}uS_z(x)\left(1+g_{\alpha, \beta}(x)\right).
\end{equation}
We note that there is a function $f(x)=o(1)$ such that uniformly for $0\leq \alpha\leq \beta\leq 1$ and $\beta-\alpha\geq r$,
$$
\textrm{max}(|f_{\alpha,\beta}(x)|, |g_{\alpha,\beta}(x)|)\leq f(x).$$
Then we can write
\begin{align*}
\sum_{\substack{{\alpha \leq \frac{\log p}{\log x} < \beta} \\{p\equiv 1\ \mathrm{ mod } \ u}}} \frac{\tau_z(p-1)}p&=(\beta-\alpha)\frac{\tau(u)}uS_z(x)\left(1+g_{\alpha, \beta}(x)\right)\\
&=\frac{\tau(u)}u\sum_{\alpha \leq \frac{\log p}{\log x} < \beta} \frac{\tau_z(p-1)}p \left(\frac{1+g_{\alpha,\beta}(x)}{1+f_{\alpha,\beta}(x)}\right)\\
&=\frac{\tau(u)}u\sum_{\alpha \leq \frac{\log p}{\log x} < \beta} \frac{\tau_z(p-1)}p \left(1+O(f(x))\right).
\end{align*}
Consider any $v$-cube $[\alpha_1,\beta_1]\times \cdots \times [\alpha_v, \beta_v]\subseteq [0,1]^v$ of side-length $r$. Then by the above observation,
\begin{align*}
\sum_{\substack{{\forall_i, \ \alpha_i\leq \frac{\log p_i}{\log x}<\beta_i}\\{p_i \equiv 1 \ \mathrm{ mod } \ u_i \ \mathrm{ for }\  i=1, \ 2}}} & \frac{\tau_z(p_1-1)\tau_z(p_2-1)\cdots\tau_z(p_v-1)}{p_1p_2\cdots p_v}\\
&=\frac{\tau(u_1)\tau(u_2)}{u_1u_2}\sum_{\forall_i, \ \alpha_i\leq \frac{\log p_i}{\log x}<\beta_i} \frac{\tau_z(p_1-1)\tau_z(p_2-1)\cdots\tau_z(p_v-1)}{p_1p_2\cdots p_v} (1+O(f(x)))^2.
\end{align*}
This proves (20). For the proof of (19), we use instead
\begin{align*}
\sum_{\substack{{\alpha \leq \frac{\log p}{\log x} < \beta} \\{p\equiv 1\ \mathrm{ mod } \ u}}} \frac{\tau_z(p-1)}p&=\frac{R_{u,z}(t)}t|_{x^{\alpha}}^{x^{\beta}}+\int_{x^{\alpha}}^{x^{\beta}} \frac{R_{u,z}(t)}{t^2}dt\\
&\ll \frac{\tau(u)}{\phi(u)}\left((\beta-\alpha)\log x + O(1)\right)
\ll \frac{\tau(u)}{\phi(u)}(\beta-\alpha)\log x \\
&\ll \frac{\tau(u)}{\phi(u)}(\beta-\alpha) S_z(x) \log z\ll \frac{\tau(u)}{\phi(u)} \sum_{\alpha\leq \frac{\log p}{\log x}<\beta} \frac{\tau_z(p-1)}p \log z,\end{align*}
which follows from Lemma 2.3(7).
\end{proof}
We impose some restrictions on the primes $p_1$, $\ldots$ , $p_v$:\\

R1. $p_1, \ldots , p_v$ are distinct.

R2. For each $i$, $q^2\nmid p_i-1$ for any prime $q>z$.

R3. $q^2\nmid \phi(p_1\cdots p_v)$ for any prime $q>z^2$.\\

Recall that we chose
$$v=\left\lfloor c \sqrt{\frac{\log x}{\log_2 x}} \right\rfloor$$
for some positive constant $c$ to be determined.
Let $ {\mathfrak{S}_{v,z}}^{(1)}(x)$ be the contribution of primes to $ {\mathfrak{S}_{v,z}}(x)$ not satisfying R1. Note that if R1 is not satisfied, then some primes among $p_1$, $\ldots$ , $p_v$ are repeated. Then by Lemma 2.6(18),
\begin{align*}
 {\mathfrak{S}_{v,z}}^{(1)}(x)&\ll \binom v2 \left( \sum_{z<p\leq x} \frac{\tau_z(p-1)^2}{p^2} \right) {\mathfrak{S}_{v-2, z}}(x)\\
&\ll v^2 \frac{\log^3 z}{z} \frac{v(v-1)}{S_z(x)^2}  {\mathfrak{S}_{v,z}}(x)\\
&\ll \frac{v^4\log^5 z}{z\log^2 x}  {\mathfrak{S}_{v,z}}(x)\ll \frac{\log^3 z}z  {\mathfrak{S}_{v,z}}(x).
\end{align*}
Let $ {\mathfrak{S}_{v,z}}^{(2)}(x)$ be the contribution of primes to $ {\mathfrak{S}_{v,z}}(x)$ not satisfying R2. Note that if R2 is not satisfied, then $q^2|p_i-1$ for some primes $p_i$ and $q>z$. Let $\mathbf{u}_{q^2}:=(q^2, 1, \ldots , 1)$. Suppose that $q^2|p_i-1$ for some $p_i$ and $q>z^2$. Then the contribution of those primes to $ {\mathfrak{S}_{v,z}}^{(2)}(x)$ is by (19),
\begin{align*}
\ll\sum_{q>z^2} \binom v1  {\mathfrak{S}_{\mathbf{u}_{q^2}, v,z}}(x)&\ll \sum_{q>z^2}\frac v{\phi(q^2)} {\mathfrak{S}_{v,z}}(x) \log z\ll \sum_{q>z^2} \frac v{q^2}  {\mathfrak{S}_{v,z}}(x)\log z \ll \frac{v}{z^2} {\mathfrak{S}_{v,z}}(x).
\end{align*}
Suppose that $q^2|p_i-1$ for some $p_i$ and $z<q\leq z^2$, then we have by (20),
\begin{align*}
\ll\sum_{z<q\leq z^2} \binom v1  {\mathfrak{S}_{\mathbf{u}_{q^2}, v,z}}(x)&\ll \sum_{z<q\leq z^2}\frac v{q^2} {\mathfrak{S}_{v,z}}(x)\ll \frac v{z\log z}  {\mathfrak{S}_{v,z}}(x).
\end{align*}
Thus, we have
$$
 {\mathfrak{S}_{v,z}}^{(2)}(x)\ll \frac v{z\log z}  {\mathfrak{S}_{v,z}}(x).
$$
Let $ {\mathfrak{S}_{v,z}}^{(3)}(x)$ be the contribution of primes to $ {\mathfrak{S}_{v,z}}(x)$ satisfying R1 and R2, but not satisfying R3. Note that if R1, R2 are satisfied and R3 is not satisfied, then there are at least two distinct primes $p_i$, $p_j$ such that $q|p_i-1$ and $q|p_j-1$. Let $\mathbf{u}_{q,q}:=(q,q,1, \ldots , 1)$. Suppose first that this happens with $q>z^4$. Then by (19), the contribution is
\begin{align*}
&\ll \sum_{q>z^4} \binom v2  {\mathfrak{S}_{\mathbf{u}_{q,q}, v,z}}(x)\ll \sum_{q>z^4} \frac{v^2}{\phi(q)^2} {\mathfrak{S}_{v,z}}(x) \log^2 z \ll\frac{v^2 \log z}{z^4}  {\mathfrak{S}_{v,z}}(x).
\end{align*}
Suppose that this happens with $z^2<q\leq z^4$. Then by (20), the contribution is
\begin{align*}
&\ll \sum_{z^2<q\leq z^4} \binom v2  {\mathfrak{S}_{\mathbf{u}_{q,q}, v,z}}(x)\ll \sum_{z^2<q\leq z^4} \frac{v^2}{q^2}  {\mathfrak{S}_{v,z}}(x) \ll \frac{v^2}{z^2\log z}  {\mathfrak{S}_{v,z}}(x).
\end{align*}
Thus, we have
$$
 {\mathfrak{S}_{v,z}}^{(3)}(x)\ll \frac{v^2}{z^2\log z}  {\mathfrak{S}_{v,z}}(x).
$$
We write $ {\mathfrak{S}_{v,z}}^{(0)}(x)$ to denote the contribution of those primes to $ {\mathfrak{S}_{v,z}}(x)$ satisfying all three restrictions R1, R2, and R3. By the above estimates, we have
\begin{align*}
 {\mathfrak{S}_{v,z}}^{(0)}(x)&\geq {\mathfrak{S}_{v,z}}(x)- {\mathfrak{S}_{v,z}}^{(1)}(x)- {\mathfrak{S}_{v,z}}^{(2)}(x)- {\mathfrak{S}_{v,z}}^{(3)}(x) \\
&=  {\mathfrak{S}_{v,z}}(x)\left(1+O\left(\frac{\log^3 z}z\right)+O\left(\frac v{z\log z}\right)+O\left(\frac{v^2}{z^2\log z}\right) \right).\end{align*}
Therefore,
\begin{equation}
 {\mathfrak{S}_{v,z}}^{(0)}(x)= {\mathfrak{S}_{v,z}}(x)\left(1+O\left(\frac{\log^3 z}z\right)+O\left(\frac v{z\log z}\right)+O\left(\frac{v^2}{z^2\log z}\right) \right).
\end{equation}
\section{Proof of Theorem 1.1}
We set
$$
v=v(x):=\left\lfloor c \sqrt{\frac{\log x}{\log_2 x}} \right\rfloor, \ \  z=z(x):=\sqrt{\log x},$$
$$
y:=\exp\left(\sqrt{\log x}\right)$$
with a positive constant $c$ to be determined.

Consider a subset $Q_z(x)$ of primes defined by:
$$
Q=Q_z(x):=\{p : p \leq x, \ q^2 \nmid p-1 \ \textrm{ for any prime } q >z \}.$$
We define $\mathcal{N}$, $\mathcal{M}$ by:
$$
\mathcal{N}=\mathcal{N}_v(x):=\{n\leq x : n \ \textrm{ is square-free, } p|n \ \Rightarrow p\in Q, \ w(n)=v  \},
$$
$$
\mathcal{M}=\mathcal{M}_v(x):=\{n\leq x : n\in \mathcal{N},  \ q^2 \nmid \phi(n) \ \textrm{ for any prime } q>z^2 \}.$$
We write
$$
V_{\mathcal{M}}(x):=\sum_{n\in\mathcal{M}} \frac{\tau_z(\lambda(n))}n, \ \ \tau''_z(n):=\prod_{p|n} \tau_z(p-1).$$
We also write
$$
W_{\mathcal{M}} :=\sum_{n\in\mathcal{M}} \frac{\tau''_z(n)}n, \ \ W_{\mathcal{M}}':=\sum_{n\in\mathcal{M}} \frac{\tau''_{z^2}(n)}n.$$

By (23), the contribution of those primes satisfying R1, R2, and R3 to $ {\mathfrak{S}_{v,z}}(x)$, which we wrote as $ {\mathfrak{S}_{v,z}}^{(0)}(x)$ satisfies
\begin{align*}
 {\mathfrak{S}_{v,z}}^{(0)}(x)&=  {\mathfrak{S}_{v,z}}(x)\left(1+O\left(\frac{\log^3 z}{z}\right)+O\left(\frac v{z\log z}\right)+O\left(\frac{v^2}{z^2\log z}\right) \right).\\
&= {\mathfrak{S}_{v,z}}(x)\left(1+O\left(\frac1{\log_2 x}\right)\right).\end{align*}
Then by Lemma 2.6(18) and Stirling's formula,
$$
W_{\mathcal{M}}\geq \frac1{v!} {\mathfrak{S}_{v,z}}^{(0)}(x)\asymp \frac1{v} \left(\frac ev\right)^{2v} \left(c_1\frac{\log x}{\log z}\right)^v(1+o(1))^{v}
$$
Thus,
$$
W_{\mathcal{M}}\gg \exp\left( \sqrt{\frac{\log x}{\log_2 x}} \left( 2c + c \log c_1 - 2 c\log c + c \log 2+o(1)\right) \right).
$$
Maximizing $2c + c \log c_1 - 2 c\log c + c \log 2$ by the first derivative, we have $c=\sqrt 2 e^{-\gamma/2}$, hence
$$
W_{\mathcal{M}}\gg \exp\left(2\sqrt 2 e^{-\frac{\gamma}2} \sqrt{\frac{\log x}{\log_2 x}} (1+o(1)) \right).
$$
For $W_{\mathcal{M}}'$, we have by (23), the contribution of those primes satisfying R1, R2, and R3 to $ {\mathfrak{S}_{v,z^2}}(x)$, say $ {\mathfrak{S}_{v,z^2}}^{(0')}(x)$ satisfies
\begin{align*}
 {\mathfrak{S}_{v,z^2}}^{(0')}(x)&=  {\mathfrak{S}_{v,z^2}}(x)\left(1+O\left(\frac{\log^3 z}{z^2}\right)+O\left(\frac v{z\log z}\right)+O\left(\frac{v^2}{z^2\log z}\right) \right).\\
&= {\mathfrak{S}_{v,z^2}}(x)\left(1+O\left(\frac1{\log_2 x}\right)\right).\end{align*}
Then by Lemma 2.6(18) and Stirling's formula, as $x\rightarrow\infty$,
$$
W_{\mathcal{M}}'\geq \frac1{v!} {\mathfrak{S}_{v,z^2}}^{(0')}(x)\asymp \frac1{v} \left(\frac ev\right)^{2v} \left(c_1\frac{\log x}{\log z^2}\right)^v(1+o(1))^{v}
$$

Thus,
$$
W_{\mathcal{M}}'\gg \exp\left( \sqrt{\frac{\log x}{\log_2 x}} \left( 2c + c \log c_1 - 2 c\log c  +o(1)\right) \right).
$$
Maximizing $2c + c \log c_1 - 2 c\log c$ by the first derivative, we have $c=  e^{-\gamma/2}$, hence as $x\rightarrow\infty$,
$$
W_{\mathcal{M}}'\gg \exp\left(2 e^{-\frac{\gamma}2} \sqrt{\frac{\log x}{\log_2 x}} (1+o(1)) \right).
$$
Therefore, we have just proved the lower bounds of the following:
\begin{theorem}For $z=\sqrt{\log x}$, as $x\rightarrow\infty$,
\begin{equation}\sum_{n\leq x}\mu^2 (n) \frac{\tau''_z(n)}n =\exp\left(2\sqrt 2 e^{-\frac{\gamma}2}\sqrt{\frac{\log x}{\log_2 x}}(1+o(1))  \right),\end{equation}
and
\begin{equation}\sum_{n\leq x} \mu^2(n)\frac{\tau''_{z^2}(n)}n =\exp\left(2e^{-\frac{\gamma}2}\sqrt{\frac{\log x}{\log_2 x}}(1+o(1)) \right).\end{equation}
\end{theorem}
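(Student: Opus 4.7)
Since the lower bounds of (24) and (25) are proved in the discussion immediately preceding the theorem statement, the task is to supply matching upper bounds. The starting point is the identity, valid because $\mu^2(n)=1$ forces $n=p_1\cdots p_v$ with distinct primes,
\[
\sum_{n\leq x}\mu^2(n)\frac{\tau''_z(n)}{n}=\sum_{v\geq 0}\frac1{v!}\sum_{\substack{p_1,\ldots,p_v\textrm{ distinct}\\p_1\cdots p_v\leq x}}\frac{\prod_i\tau_z(p_i-1)}{\prod_i p_i}\leq \sum_{v\geq 0}\frac{\mathfrak{T}_{v,z}(x)}{v!},
\]
and the proof of (25) will proceed identically with $z$ replaced by $z^2$. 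The task thus reduces to an upper bound for $\mathfrak{T}_{v,z}(x)$ uniformly in $v$, followed by a summation over $v$.

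The heart of the argument is an upper-bound companion to Lemma 2.6(18), namely
\[
\mathfrak{T}_{v,z}(x)\leq \frac{S_z(x)^v}{v!}(1+o(1))^v,
\]
valid uniformly for $v$ in a saddle-point window of the form $v\leq \sqrt{S_z(x)}\log_2 x$, so that Corollary 2.2(15) still applies (one checks $r\log x\geq \log^{1/5} x$ throughout). The proof will follow the same box decomposition of the simplex $\mathbb{T}_v$ that defines $\mathfrak{S}_{v,z}(x)$; the only new point is that $\mathfrak{T}_{v,z}(x)$ picks up those cubes $B_{s_1,\ldots,s_v}$ that straddle the boundary hyperplane $t_1+\cdots+t_v=1$. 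Their count will be shown to be $O(v r)=O(v^{-1/2}(\log v)^{-1})$ times the interior count $|M_v|$, and since each such cube still contributes at most $r^v S_z(x)^v(1+o(1))^v$ by Corollary 2.2(15), the boundary correction survives merely as an additional $(1+o(1))^v$ factor.

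Substituting back yields
\[
\sum_{n\leq x}\mu^2(n)\frac{\tau''_z(n)}{n}\leq \sum_v \frac{S_z(x)^v}{(v!)^2}(1+o(1))^v+(\textrm{tail}).
\]
By Stirling the general term is essentially $(2\pi v)^{-1}(e^2 S_z(x)/v^2)^v$, maximised at $v^\ast=\lfloor\sqrt{S_z(x)}\rfloor$ with value $\exp(2\sqrt{S_z(x)}(1+o(1)))$; standard Laplace-type bounds show the sum is of the same size up to polynomial factors in $v^\ast$, and the tail $v>\sqrt{S_z(x)}\log_2 x$ is handled by the crude estimate $\mathfrak{T}_{v,z}(x)\leq S_z(x)^v$ combined with the extra $1/v!$ factor, which decays super-geometrically past the saddle. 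Specialising $z=\sqrt{\log x}$ gives $S_z(x)=2c_1\log x/\log_2 x$, and $\sqrt{c_1}=e^{-\gamma/2}$ turns the exponent into $2\sqrt{2}\,e^{-\gamma/2}\sqrt{\log x/\log_2 x}$, proving (24); the analogous computation with $z^2=\log x$ gives $S_{z^2}(x)=c_1\log x/\log_2 x$ and exponent $2e^{-\gamma/2}\sqrt{\log x/\log_2 x}$, proving (25).

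The main obstacle is justifying the boundary-layer estimate for $\mathfrak{T}_{v,z}(x)-\mathfrak{S}_{v,z}(x)$ in the high-dimensional simplex $\mathbb{T}_v$: one must count cubes $B_{s_1,\ldots,s_v}$ that meet but are not contained in $\mathbb{T}_v$, and show their aggregate contribution gives only a $(1+o(1))^v$ correction. This is delicate precisely in the saddle-point range $v=\Theta(\sqrt{S_z(x)})$, where the $v$-fold exponentiation can amplify even small per-cube losses, and is the higher-dimensional analogue of the Gauss Circle lattice-point count alluded to in the introduction. Once this estimate is in hand, the remaining work is a routine saddle-point evaluation of a Bessel-type sum.
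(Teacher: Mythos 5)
Your outline takes a genuinely different route from the paper. The paper disposes of the upper bounds in one line, by an appeal to Rankin's method exactly as in \cite[Theorem 1]{LP}; you instead try to carry the Gauss-circle box decomposition that underlies $\mathfrak{S}_{v,z}(x)$ all the way to an upper bound, by summing $\mathfrak{T}_{v,z}(x)/v!$ over $v$. The first two steps of your plan are sound: the identity relating the left side of (24) to $\sum_v \mathfrak{T}_{v,z}(x)/v!$ is correct (the $1/v!$ accounting for the $v!$ orderings of the distinct prime factors of a square-free $n$), and the boundary-layer count for the cubes straddling $\partial\mathbb{T}_v$ does yield a ratio $O(v^{3/2}r)=O(1/\log v)=o(1)$ against $|M_v|$ (your stated $O(vr)$ is off by $\sqrt{v}$ but still $o(1)$, so that slip is harmless). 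Hence, in the range where Corollary 2.2(15) is applicable — roughly $v^{3/2}\log v\leq\log^{4/5}x$, which does contain the saddle $v^\ast\asymp\sqrt{S_z(x)}$ and even $\sqrt{S_z(x)}\log_2 x$ — you indeed get $\mathfrak{T}_{v,z}(x)\leq\frac{1}{v!}S_z(x)^v(1+o(1))^v$, and the central block of the $v$-sum produces the right exponent.

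The gap is in the tail. You claim that for $v>\sqrt{S_z(x)}\log_2 x$ the crude estimate $\mathfrak{T}_{v,z}(x)\leq S_z(x)^v$ together with the $1/v!$ factor "decays super-geometrically past the saddle," but this conflates two different saddles. The sequence $S_z(x)^v/(v!)^2$ peaks near $v=\sqrt{S_z(x)}$, but the sequence you are left with after the crude bound, $S_z(x)^v/v!$, is \emph{increasing} up to $v\approx S_z(x)$ and contributes a total of order $e^{S_z(x)}=\exp\!\bigl(c\log x/\log_2 x\bigr)$, which dwarfs the target $\exp\!\bigl(c'\sqrt{\log x/\log_2 x}\bigr)$. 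So the range $\sqrt{S_z(x)}\log_2 x<v\lesssim S_z(x)$ is genuinely uncontrolled: the sharp bound requires Corollary 2.2 and hence $v^{3/2}\log v\lesssim\log^{4/5}x$, while the crude bound does not begin to save anything until $v$ is past $S_z(x)\asymp\log x/\log_2 x$, and there is a large interval between these thresholds. Merely iterating $\mathfrak{T}_{v+1,z}\leq S_z(x)\,\mathfrak{T}_{v,z}$ from the edge of the good range does not close this, because the resulting ratio $(S_z(x)/v)^{v-v_0}$ is still $>1$ there. Some additional input — most naturally Rankin's device applied to $\mathfrak{T}_{v,z}(x)$ itself (which is essentially what the cited \cite[Theorem 1]{LP} does to the whole sum, bypassing the decomposition by $\omega(n)$) — is needed to make the tail small; as written, the argument does not give the claimed upper bound.
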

Note that the upper bounds follow from Rankin's method as in ~\cite[Theorem 1]{LP}. \\

We proceed the similar argument as in ~\cite{LP}. Let $\mathcal{M}=\mathcal{M}_v(x)$ be as above with the choice $c=e^{-\gamma/2}$. Now, for $n\in \mathcal{M}$, we have
$$
\tau_z(\phi(n))=\tau_{z,z^2}(\phi(n)) \tau_{z^2} (\phi(n)) \geq  \tau_{z^2}(\phi(n))= \tau''_{z^2}(n),$$
$$
\tau_z(\lambda(n))=\tau_{z,z^2}(\lambda(n)) \tau_{z^2} (\lambda(n)) \geq  \tau_{z^2}(\lambda(n)) =\tau''_{z^2}(n).$$

Then as $x\rightarrow\infty$,
$$
V_{\mathcal{M}}(x)\geq W_{\mathcal{M}}'\gg \exp\left( 2e^{-\frac{\gamma}2} \sqrt{\frac{\log x}{\log_2 x}}(1+o(1))\right).$$
The argument proceeds as in ~\cite{LP}. Let $\mathcal{M'}$ be defined by
$$
\mathcal{M'}:=\left\{ np : n\in \mathcal{M}_v(xy^{-1}), \ \ p \ \textrm{is a prime}, \ \  p \leq \frac xn \right\}.$$
For those $n'=np\in \mathcal{M'}$, we have
$$
\tau(\lambda(np))\geq \tau(\lambda(n))\geq \tau_z(\lambda(n)),$$
and a given $n'\in \mathcal{M'}$ has at most $v+1$ decompositions of the form $n'=np$ with $n\in \mathcal{M}_v(xy^{-1})$, $ p\leq \frac xn$.

Since $n\leq x y^{-1}$ for $n\in\mathcal{M}_v(xy^{-1})$, the number of $p$ in $ p\leq \frac xn$ is
$$
\pi\left(\frac xn\right)  \gg \frac{x}{n\log x}.$$
Note that $\log y = \sqrt{\log x}=o(\log x)$.
This gives
$$
V_{\mathcal{M}}(xy^{-1})\gg \exp\left( 2e^{-\frac{\gamma}2} \sqrt{\frac{\log x}{\log_2 x}}(1+o(1))\right).$$
Then
$$
\sum_{n\leq x}\tau(\lambda(n))\geq \sum_{n\in \mathcal{M'}}\tau(\lambda(n)) \gg V_{\mathcal{M}}(xy^{-1}) \frac x{v\log x}\gg x\exp\left( 2e^{-\frac{\gamma}2} \sqrt{\frac{\log x}{\log_2 x}}(1+o(1))\right).
$$
This completes the proof of Theorem 1.1.\\

\nid \bf Remarks. \rm \\

\nid 1. In the proof of Theorem 1.1, we dropped $\tau_{z,z^2}(\phi(n))$. This is where a prime $z<q\leq z^2$ can divide multiple $p_i-1$ for $i=1, 2, \cdots, v$, and that is the main difficulty in obtaining more precise formulas for $\sum_{n\leq x}\tau(\phi(n))$ and $\sum_{n\leq x}\tau(\lambda(n))$.\\

\nid 2. We will see a heuristic argument suggesting that as $x\rightarrow\infty$,
$$
\sum_{n\leq x} \tau(\lambda(n))=x\exp\left( 2\sqrt 2 e^{-\frac{\gamma}2}\sqrt{\frac{\log x}{\log_2 x}}(1+o(1)) \right),
$$
and hence,
$$
\sum_{n\leq x} \tau(\phi(n))=x\exp\left( 2\sqrt 2 e^{-\frac{\gamma}2}\sqrt{\frac{\log x}{\log_2 x}}(1+o(1)) \right).
$$
However, we have
$$
\sum_{n\leq x}\tau(\lambda(n)) = o\left(\sum_{n\leq x}\tau(\phi(n))\right).$$
We will prove this in the following section. The prime $2$ plays a crucial role in the proof of Theorem 1.2.
\section{Proof of Theorem 1.2}
We put $k$ and $w$ as in ~\cite{LP}:
$$
k=\lfloor A\log_2 x \rfloor, \ \ \omega=\left\lfloor \frac{\sqrt{\log x}}{\log_2^2 x}\right\rfloor.
$$
Here, $A$ is a positive constant to be determined.
Also, define $\mathcal{E}_1(x)$, $\mathcal{E}_2(x)$ and $\mathcal{E}_3(x)$ in the same way:
$$
\mathcal{E}_1(x):= \{n\leq x : 2^k | n \ \textrm{or} \ \textrm{there is a prime $p|n$ with} \ p\equiv 1 \ \mathrm{mod} \ 2^k   \},
$$
$$
\mathcal{E}_2(x):=\{ n\leq x : \omega(n)\leq \omega \},
$$
and
$$
\mathcal{E}_3(x):=\{ n\leq x \} - \left(\mathcal{E}_1(x)\cup \mathcal{E}_2(x)\right).
$$
We need the following lemma.
\begin{lemma}
For any $2\leq y\leq x$, we have
$$
\sum_{n\leq \frac xy} \frac{\tau(\phi(n))}n \ll \frac{\log^5 x}x \sum_{n\leq x} \tau(\phi(n)).
$$
\end{lemma}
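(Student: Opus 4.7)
The plan is to bound the left-hand side by constructing, for each $n \le x/y$, a large family of auxiliary integers $m \le x$ whose $\tau(\phi(m))$ dominates $\tau(\phi(n))$, and then compare the resulting double sum to $\sum_{m \le x} \tau(\phi(m))$ via a multiplicity count. The key tool is the divisibility property
$$ a \mid b \ \Longrightarrow\ \phi(a) \mid \phi(b) \ \Longrightarrow\ \tau(\phi(a)) \le \tau(\phi(b)), $$
which reduces, by induction on $b/a$, to the case $b = ap$ with $p$ prime, where either $\phi(ap) = p\phi(a)$ (if $p \mid a$) or $\phi(ap) = (p-1)\phi(a)$ (if $p \nmid a$).

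The main step treats the range $x/n \ge 11$. For each such $n \le x/y$, the Prime Number Theorem gives $\pi(x/n) \gg (x/n)/\log x$ primes $p \le x/n$, and for every such $p$ the integer $m := np \le x$ satisfies $\tau(\phi(m)) \ge \tau(\phi(n))$. Reorganizing pairs $(n,p)$ by their product $m$, and noting that each $m \le x$ has at most $\omega(m) \ll \log x$ representations $m = np$ with $p$ prime, we get
\begin{align*}
\sum_{\substack{n \le x/y \\ x/n \ge 11}} \tau(\phi(n))\,\pi(x/n)
\;\le\; \sum_{\substack{n,\,p\text{ prime}\\ np \le x}} \tau(\phi(np))
\;\le\; \sum_{m \le x} \omega(m)\,\tau(\phi(m))
\;\ll\; \log x \sum_{m \le x} \tau(\phi(m)).
\end{align*}
Substituting $\pi(x/n) \gg (x/n)/\log x$ on the left and dividing through then yields
$$ \sum_{\substack{n \le x/y \\ x/n \ge 11}} \frac{\tau(\phi(n))}{n} \ \ll\ \frac{\log^2 x}{x}\sum_{m \le x}\tau(\phi(m)). $$

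The leftover tail $x/11 < n \le x/y$ (nonempty only when $y < 11$) I would dispose of crudely: on this range $1/n < 11/x$, so its total contribution is at most $(11/x)\sum_{n \le x}\tau(\phi(n))$, which fits inside the target bound with enormous slack. Combining the two ranges gives the conclusion with $\log^2 x$ in place of $\log^5 x$, hence a fortiori the stated inequality. The main (minor) obstacle is checking the multiplicity count in the middle step; no Bombieri--Vinogradov input from Section~2 is needed, which is why this is one of the simpler lemmas in the paper.
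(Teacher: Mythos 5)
Your proof is correct, and it takes a genuinely different (and in fact simpler) route than the paper's. The paper first factors $n = mk$ with $k = \mathrm{rad}(n)$ the square-free kernel, uses $\phi(n) = m\phi(k)$ to get $\tau(\phi(n)) \le \tau(m)\tau(\phi(k))$, pays a $\log^2 x$ for the divisor sum $\sum_{m} \tau(m)/m$, pigeonholes on $\omega(k)=v$ for another $\log x$, and only then runs the ``multiply by a prime, bound the multiplicity by $v+1$'' argument — arriving at $\log^5 x$. You instead apply that last device directly to $n$ itself, using the standard divisibility fact $a\mid b \Rightarrow \phi(a)\mid\phi(b) \Rightarrow \tau(\phi(a)) \le \tau(\phi(b))$ (correctly reduced by induction to the single-prime step) and the exact multiplicity count $\#\{(n,p): np = m,\ p\text{ prime}\} = \omega(m) \ll \log x$. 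This shortcuts the kernel decomposition and the pigeonhole entirely, giving the sharper exponent $\log^2 x$; the crude $O(1/x)$ bound on the residual range $x/11 < n \le x/y$ is also handled correctly. The only thing the paper's route buys is structural parallelism with the set $\mathcal{M}_v$ used elsewhere in the argument, where working with square-free $k$ and a fixed $\omega(k)$ is natural; for this lemma in isolation, your version is both shorter and stronger, and since only the existence of some polynomial power of $\log x$ matters downstream, the improvement is harmless.
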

\begin{proof}
As in the proof of ~\cite[Theorem 1]{LP}, we use the square-free kernel $k=k(n)$ (if a prime $p$ divides $n$, then $p|k$, and $k$ is a square-free positive integer which divides $n$) and the factorization $n=mk$ to rewrite the sum as
\begin{align*}
\sum_{n\leq \frac xy} \frac{\tau(\phi(n))}n &\leq \sum_{k\leq \frac xy}\mu^2(k) \sum_{m\leq \frac x{ky}} \frac{\tau(m)\tau(\phi(k))}{mk}\\
&\ll \sum_{k\leq \frac xy}\mu^2(k) \frac{\tau(\phi(k))}k \log^2 x.
\end{align*}
Note that we have uniformly $w(n)\ll \log x$. Find $v$ such that
$$
\sum_{\substack{{k\leq \frac xy}\\{\omega(k)=v}}}\mu^2(k) \frac{\tau(\phi(k))}k $$
is maximal. Then we have
$$
\sum_{k\leq \frac xy}\mu^2(k) \frac{\tau(\phi(k))}k \ll \log x \sum_{\substack{{k\leq \frac xy}\\{\omega(k)=v}}}\mu^2(k) \frac{\tau(\phi(k))}k.
$$
We adopt an idea from the proof of Theorem 1.1.
Let $\mathcal{M}=\mathcal{M}_v(xy^{-1})$ be the set of square-free numbers $k\leq xy^{-1}$ with $\omega(k)=v$.
Define
$$\mathcal{M}':= \left\{ kp : k\in \mathcal{M}_v(xy^{-1}), \ \ p \ \textrm{is a prime}, \ \ p \leq \frac xk \right\}.$$
For those $n'=kp\in \mathcal{M'}$ with $k\in \mathcal{M}$, we have
$$
\tau(\phi(kp))\geq \tau(\phi(k)),$$
and  any given $n'\in \mathcal{M'}$ has at most $v+1$ decompositions of the form $n'=kp$ with $k\in \mathcal{M}$, $p\leq \frac xk$.

Since  the number of $p$ satisfying $p\leq \frac xk$ is
$$
\pi\left(\frac xk\right)  \gg \frac{x}{k\log x},$$
it follows that
$$
\sum_{n\leq x}\tau(\phi(n))\geq \sum_{n\in \mathcal{M'}}\tau(\phi(n)) \gg \sum_{\substack{{k\leq \frac xy}\\{\omega(k)=v}}}\mu^2(k) \frac{\tau(\phi(k))}k\frac x{v\log x}.
$$
Since $v\ll \log x$, we have
$$
\sum_{\substack{{k\leq \frac xy}\\{w(k)=v}}}\mu^2(k) \frac{\tau(\phi(k))}k \ll \frac{\log^2 x}x \sum_{n\leq x}\tau(\phi(n)).$$
This gives
$$
\sum_{k\leq \frac xy}\mu^2(k) \frac{\tau(\phi(k))}k \ll \frac{\log^3 x}x \sum_{n\leq x}\tau(\phi(n)).
$$
Then the result follows.
\end{proof}
For $n\in \mathcal{E}_1(x)$, we have by Lemma 2.3 and Lemma 4.1,
\begin{align*}
\sum_{n\in\mathcal{E}_1(x)} \tau(\lambda(n))&\leq x\sum_{n\in\mathcal{E}_1(x)}\frac{\tau(\phi(n))}n\\
&\leq x \frac{\tau(2^k)}{2^k}\sum_{m\leq \frac x{2^k}}\frac{\tau(\phi(m))}m+ x \sum_{\substack{{p\leq x}\\{p \equiv 1 \ \mathrm{mod} \ 2^k}}}\frac{\tau(p-1)}p\sum_{m\leq \frac xp} \frac{\tau(\phi(m))}m \\
&\ll \log^5 x \left(\frac{\tau(2^k)}{\phi(2^k)} \log x \sum_{n\leq x}\tau(\phi(n))\right)\\
&\ll \log^6 x \frac{A\log_2 x}{\log^{A\log 2} x} \sum_{n\leq x}\tau(\phi(n)).
\end{align*}
If we take $A\log 2 > 7$, then we obtain that
$$
\sum_{n\in \mathcal{E}_1(x)} \tau(\lambda(n)) = o\left( \sum_{n\leq x} \tau(\phi(n))\right).
$$
For $n\in \mathcal{E}_2(x)$, we use the square-free kernel $k=k(n)$ and the factorization $n=mk$ as before,
\begin{align*}
\sum_{n\in\mathcal{E}_2(x)}\tau(\lambda(n))&\leq \sum_{n\in\mathcal{E}_2(x)}\tau(\phi(n))\\
&\ll\sum_{\substack{{k\leq x}\\{\omega(k)\leq \omega}}}\mu^2(k)\sum_{m\leq \frac xk} \tau(m)\tau(\phi(k))\\
&\ll\sum_{\substack{{k\leq x}\\{\omega(k)\leq \omega}}}\mu^2(k) \frac xk (\log x) \tau(\phi(k))\\
&\ll x\omega\log x\left(\sum_{p\leq x}\frac{\tau(p-1)}p\right)^{\omega}\\
&\ll x (\log x)^{\frac 32} (C\log x)^{\omega} \ll x  \exp\left( 2\frac{\sqrt{\log x}}{\log_2 x}\right).
\end{align*}
Thus, by Theorem 1.1,
$$
\sum_{n\in\mathcal{E}_2(x)}\tau(\lambda(n))=o\left(\sum_{n\leq x}\tau(\phi(n))\right).
$$
For $n\in \mathcal{E}_3(x)$, we follow the method of ~\cite{LP}. We have
\begin{align*}
\frac{\tau(\phi(n))}{\tau(\lambda(n))}\geq \frac {\omega}k \gg \frac{\sqrt{\log x}}{\log_2^3 x}.
\end{align*}
Then
\begin{align*}
\sum_{n\in\mathcal{E}_3(x)} \tau(\lambda(n))\ll \frac{\log_2^3 x}{\sqrt{\log x}}\sum_{n\in\mathcal{E}_3(x)} \tau(\phi(n))\leq \frac{\log_2^3 x}{\sqrt{\log x}} \sum_{n\leq x}\tau(\phi(n)).
\end{align*}
Therefore, putting these together, we have
$$
\sum_{n\leq x}\tau(\lambda(n))\ll \frac{\log_2^3 x}{\sqrt{\log x}}\sum_{n\leq x}\tau(\phi(n)),
$$
and Theorem 1.2 follows.
\section{Heuristics}
Recall that $\tau_z(\lambda(n))=\tau_{z,z^2}(\lambda(n)) \tau_{z^2} (\lambda(n))$. Let $\mathcal{M}$ be the set defined in Section 3 with the choice of $v=\left\lfloor \sqrt 2 e^{-\gamma/2} \sqrt{\frac{\log x}{\log_2 x}}\right\rfloor$. As in Section 3, we have $\tau_{z^2}(\lambda(n))=\tau''_{z^2}(n)$ for $n\in\mathcal{M}$. It is important to note that $q^2\nmid p_i-1$ for any primes $p_i|n$ and $q>z$. Also, we have $q^2\nmid \phi(n)$ for $q>z^2$. Thus, it is enough to focus on the sum $V_{\mathcal{M}}(x)$. If we could prove that $V_{\mathcal{M}}(x)=\sum_{n\in\mathcal{M}}\frac{\tau_z(\lambda(n))}n\gg \exp\left( 2\sqrt 2 e^{-\frac{\gamma}2} \sqrt{\frac{\log x}{\log_2 x}}(1+o(1))\right)$, then the same argument as in Theorem 1.1 would allow $\sum_{n\leq x}\tau(\lambda(n))\gg x \exp\left( 2\sqrt 2 e^{-\frac{\gamma}2} \sqrt{\frac{\log x}{\log_2 x}}(1+o(1))\right)$. We need the contribution of $\tau_{z,z^2}(\lambda(n))$ over $n\in\mathcal{M}$. Let $\mathfrak{S}_{v,z}(x)$ be the sum defined in Section 2, and define
$$
\mathfrak{U}_{v,z}(x):=
 \sum_{\left(\frac{\log p_1}{\log x}, \ldots , \frac{\log p_v}{\log x}\right)\in \cup M_v} \frac{\tau_{z,z^2}(\mathrm{lcm}(p_1-1,p_2-1, \ldots ,p_v-1))}{\tau_{z,z^2}(p_1-1)\tau_{z,z^2}(p_2-1)\cdots \tau_{z,z^2}(p_v-1)}\frac{\tau_z(p_1-1)\tau_z(p_2-1)\cdots\tau_z(p_v-1)}{p_1p_2\cdots p_v}.
$$
We have also defined in Section 2 that for $\mathbf{u}=(u_1,\ldots , u_v)$ with $1\leq u_i\leq x$,
$$
 {\mathfrak{S}_{\mathbf{u},v,z}}(x):=\sum_{\substack{{\left(\frac{\log p_1}{\log x}, \ldots , \frac{\log p_v}{\log x}\right)\in \cup M_v}\\{\forall_i, \ p_i \equiv 1 \ \mathrm{ mod } \ u_i}}} \frac{\tau_z(p_1-1)\tau_z(p_2-1)\cdots\tau_z(p_v-1)}{p_1p_2\cdots p_v},
$$
We need to extend Lemma 2.6 to cover all components of $\mathbf{u}$.
\begin{lemma}
Let $\log^{\frac1A} x< z \leq \log^A x$, then
for $\mathbf{u}=(u_1, u_2,  \ldots , u_v)$ with $1\leq u_i\leq x$,
\begin{equation}
 {\mathfrak{S}_{\mathbf{u},v,z}}(x) \ll \frac{\tau(u_1)\tau(u_2)\cdots \tau(u_v)}{\phi(u_1)\phi(u_2)\cdots \phi(u_v)} {\mathfrak{S}_{v,z}}(x)(1+o(1))^k\log^k z,
\end{equation}
where $0\leq k\leq v$ is the number of $u_i$'s that are not $1$.

Assume that each $u_i$,  $1\leq i\leq v$  is either $1$ or a  positive integer with $p(u_i)>z$, $u_i< (\log x)^{A_1}$ and $\tau(u_i)<A_1$. Then
\begin{equation}
 {\mathfrak{S}_{\mathbf{u},v,z}}(x)=\frac{\tau(u_1)\tau(u_2)\cdots\tau(u_v)}{u_1u_2\cdots u_v} {\mathfrak{S}_{v,z}}(x)\left(1+o(1)\right)^k,\end{equation}
 where $0\leq k\leq v$ is the number of $u_i$'s that are not $1$.
\end{lemma}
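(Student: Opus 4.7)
The plan is to mirror the proof of Lemma 2.6 (19), (20), but now allow every one of the $v$ coordinates to carry a congruence restriction rather than only the first two. The key input is still Corollary 2.2, which furnishes uniform asymptotics for each single-coordinate sum over $\alpha\leq \log p/\log x<\beta$ with $\beta-\alpha\geq r$, both in the unrestricted form (15) and in the restricted form (17) for $p\equiv 1\ \mathrm{mod}\ u$. The crucial uniformity observation made in the proof of Lemma 2.6 persists in our setting: there is a single function $f(x)=o(1)$ such that $|f_{\alpha,\beta}(x)|,|g_{\alpha,\beta}(x)|\leq f(x)$ uniformly over $0\leq\alpha\leq\beta\leq 1$ with $\beta-\alpha\geq r$ and over the allowed $u$.

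For (26) I first decompose $\mathfrak{S}_{\mathbf{u},v,z}(x)$ as a sum over the $v$-cubes $[\alpha_1,\beta_1]\times\cdots\times[\alpha_v,\beta_v]$ of side length $r$ comprising $\cup M_v$, and on each cube factor the inner $v$-fold sum into a product of $v$ single-coordinate sums. I then apply (17) to each coordinate with $u_i\neq 1$ and (15) to each coordinate with $u_i=1$, so that the cube contribution becomes
\[
r^v S_z(x)^v\,\frac{\tau(u_1)\tau(u_2)\cdots\tau(u_v)}{u_1 u_2\cdots u_v}\bigl(1+O(f(x))\bigr)^v,
\]
where factors $\tau(u_i)/u_i$ with $u_i=1$ are interpreted as $1$. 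Summing over cubes in $M_v$ and comparing with the asymptotic (18), the shared factor $\sum_{M_v}r^vS_z(x)^v$ cancels and the $(1+O(f(x)))^{v-k}$ coming from the unrestricted coordinates merges with the $(1+o(1))^v$ already present in (18), so that (26) emerges with exactly the $k$ restricted coordinates contributing the stated $(1+o(1))^k$.

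For (25) the hypotheses $p(u_i)>z$ and $\tau(u_i)<A_1$ are dropped and (17) is no longer available. In its place I use the upper bound of Lemma 2.3 (7), $R_{u,z}(t)\ll(\tau(u)/\phi(u))t$. Partial summation in the $i$-th coordinate over $[x^{\alpha_i},x^{\beta_i}]$, followed by conversion of $\log x$ to $S_z(x)\log z$ via Corollary 2.1, yields exactly the estimate that appears in the closing display of the proof of Lemma 2.6:
\[
\sum_{\substack{\alpha_i\leq \log p_i/\log x<\beta_i\\ p_i\equiv 1\ \mathrm{mod}\ u_i}}\frac{\tau_z(p_i-1)}{p_i}\ll \frac{\tau(u_i)}{\phi(u_i)}(\beta_i-\alpha_i)S_z(x)\log z.
\]
Combining these $k$ restricted-coordinate bounds with the unrestricted asymptotic (15) on the remaining $v-k$ coordinates, summing over the cubes in $M_v$, and comparing with (18), establishes (25) with the advertised $\log^k z$ loss and with the $(1+o(1))^k$ factor playing the same book-keeping role as in (26).

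The main obstacle I anticipate is the uniformity underlying the $(1+o(1))^k$ accumulation: when $k$ becomes comparable to $1/f(x)$ the product can inflate beyond any fixed constant, so the lemma implicitly defers the burden of verifying $k f(x)=o(1)$ to wherever it is subsequently invoked. Within the scope of the heuristic in Section 5 the relevant $k$ will remain small enough that $(1+o(1))^k=1+o(1)$, making the lemma immediately usable as stated; recording the error explicitly as $(1+o(1))^k$ rather than collapsing it is a deliberate choice that reflects the coordinatewise nature of Corollary 2.2.
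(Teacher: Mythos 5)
Your proposal is correct and follows the same route the paper intends; the paper's entire proof of Lemma 5.1 is the single sentence ``The same proof as in Lemma 2.6 applies with the need of considering all components of $\mathbf{u}$,'' and you have reconstructed precisely those details, using (17)/(15) for the restricted/unrestricted coordinates in (26) and the $R_{u,z}(t)\ll(\tau(u)/\phi(u))t$ bound from Lemma 2.3(7) for (25). One point of book-keeping worth tightening: the $(1+o(1))^k$ in (26) falls out most transparently from the cube-by-cube ratio argument exactly as in the paper's proof of Lemma~2.6(20) --- for each fixed cube the restricted contribution equals $\prod_i\frac{\tau(u_i)}{u_i}$ times the unrestricted contribution times $\prod_{i:u_i\neq 1}\frac{1+g_{\alpha_i,\beta_i}(x)}{1+f_{\alpha_i,\beta_i}(x)}=(1+O(f(x)))^k$, and then one simply sums over cubes; your phrasing about the $(1+O(f(x)))^{v-k}$ ``merging'' with the $(1+o(1))^v$ from (18) describes the same cancellation, but since those error factors are cube-specific the cancellation must be performed locally on each cube before summing, which the ratio formulation makes explicit. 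Your closing observation about the $(1+o(1))^k$ accumulation (that it is only usable where $kf(x)=o(1)$) is also correct and is exactly the obstruction the paper acknowledges when noting that Lemma 5.3 cannot be extended to all primes in $(z,z^2]$.
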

The same proof as in Lemma 2.6 applies with the need of considering all components of $\mathbf{u}$.

Fix a prime $z<q\leq z^2$. Consider the number $X_q$ of primes $p_1, \ldots , p_v$ such that $q$ divides $p_i-1$.
By Lemma 5.1, it is natural to model $X_q$ by a binomial distribution with parameters $v$ and $\frac2q$. In fact, Lemma 5.1 implies that
\begin{lemma}
For any $0\leq k\leq v$, as $x\rightarrow\infty$,
\begin{align*}
P(X_q=k):&=\frac1{\mathfrak{S}_{v,z}(x)}\sum_{\substack{{\left(\frac{\log p_1}{\log x}, \ldots , \frac{\log p_v}{\log x}\right)\in \cup M_v}\\{\textrm{Exactly } k \textrm{ primes }p_i\textrm{ satisfy }q|p_i-1}}} \frac{\tau_z(p_1-1)\tau_z(p_2-1)\cdots\tau_z(p_v-1)}{p_1p_2\cdots p_v}\\
&=\binom vk \left(\frac2q\right)^k \left(1-\frac2q\right)^{v-k}(1+o(1))^v.
\end{align*}
Here, the functions implied in $1+o(1)$ only depend  on $x$ and   do not depend on $k$.
\end{lemma}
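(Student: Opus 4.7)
The plan is to apply inclusion-exclusion to the event ``exactly $k$ of $p_1, \ldots, p_v$ satisfy $q \mid p_i - 1$'', evaluate each resulting sum by Lemma 5.1(25), and recognize the binomial expansion of $(1 - 2/q)^{v-k}$.

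For a subset $T \subseteq \{1, \ldots, v\}$, let $\mathbf{u}_T$ be the tuple with $u_i = q$ for $i \in T$ and $u_i = 1$ otherwise. The weighted sum $\mathfrak{S}_{\mathbf{u}_T, v, z}(x)$ counts tuples for which $q \mid p_i - 1$ for every $i \in T$ (with no restriction on other indices), and by symmetry it depends only on $|T|$; denote this common value by $A_j$ when $|T| = j$. Möbius inversion on the event ``the set of $i$ with $q \mid p_i - 1$ is exactly $S$'', together with $\binom{v}{j}\binom{j}{k} = \binom{v}{k}\binom{v-k}{j-k}$, yields
$$\mathfrak{S}_{v,z}(x)\,P(X_q = k) = \binom{v}{k}\sum_{m=0}^{v-k}(-1)^m\binom{v-k}{m} A_{k+m}.$$
Since $z < q \leq z^2$, the prime $q$ satisfies $p(q) > z$, $q < (\log x)^{A_1}$ (given the regime $z \leq \log^A x$), and $\tau(q) = 2 < A_1$, so Lemma 5.1(25) applies to every $\mathbf{u}_T$ and gives
$$A_j = \left(\frac{2}{q}\right)^j\mathfrak{S}_{v,z}(x)(1+o(1))^j,$$
where $o(1) = \epsilon(x)$ is a function of $x$ alone. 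Substituting and factoring out $(2(1+o(1))/q)^k$ collapses the alternating sum to the binomial expansion $(1 - 2(1+o(1))/q)^{v-k}$, and since $q \geq z$ the ratio of this to $(1 - 2/q)^{v-k}$ is $(1 + O(\epsilon(x)/q))^{v-k} = (1 + o(1))^v$, using $v/q \ll v/z = O(1/\sqrt{\log_2 x})$. Assembling these pieces yields the claimed formula.

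The main obstacle is confirming that the $o(1)$ in Lemma 5.1(25) is uniform in $\mathbf{u}_T$ (hence in $j$) as $|T|$ ranges up to $v$. Because $v$ is unbounded, any dependence of the implicit constants on $|T|$ would destroy the compounded $(1+o(1))^v$ factor; verifying uniformity amounts to tracking the error terms through the proofs of Lemma 5.1 and Lemma 2.6 and checking that they depend only on the fixed parameters $A, A_1$ and on $x$. Once this uniformity is in hand, the binomial computation above is essentially formal, and the displayed formula for $P(X_q = k)$ follows.
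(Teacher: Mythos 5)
The paper does not supply a proof of Lemma 5.2 — it simply asserts that it follows from Lemma 5.1 — so your inclusion-exclusion strategy is presumably exactly what the author has in mind, and your identification of the uniformity of the $o(1)$ in Lemma 5.1 (you should be citing (26), not (25), since (25) is only an upper bound) as the crux of the matter is apt. The symmetry observation that $\mathfrak{S}_{\mathbf{u}_T,v,z}(x)$ depends only on $|T|$, and the Möbius-inversion identity you write down, are correct.

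However, there is a technical over-simplification in the collapse step. You write $A_j=(2/q)^j\mathfrak{S}_{v,z}(x)(1+\epsilon(x))^j$ with a single fixed function $\epsilon(x)$, which allows you to pull $(2(1+\epsilon)/q)^m$ through the alternating sum and obtain the closed binomial form $(1-2(1+\epsilon)/q)^{v-k}$ exactly. But the $(1+o(1))^j$ in (26) is a product of $j$ factors each of size $1+O(f(x))$, not the $j$-th power of one fixed quantity; the individual deviations need not agree in sign or magnitude, and in an alternating sum with heavy cancellation, multiplicative error bounds on the terms do not automatically give a multiplicative error bound on the sum. To make the step rigorous you should write $A_{k+m}=(2/q)^{k+m}\mathfrak{S}_{v,z}(x)(1+\delta_{k+m})$ with $|\delta_{k+m}|\leq(1+f(x))^{v}-1$, bound the deviation of $\sum_m(-1)^m\binom{v-k}{m}(2/q)^m\delta_{k+m}$ absolutely by $(1+2/q)^{v-k}\cdot\max_j|\delta_j|$, and then compare this to the main term $(1-2/q)^{v-k}$. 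The crucial extra input is that the ratio $(1+2/q)^{v-k}/(1-2/q)^{v-k}=\exp(O(v/q))=1+o(1)$ since $q>z$ and $v/z=O(1/\sqrt{\log_2 x})\to 0$ — an estimate you already invoke, but for a different comparison. Once this is in place, one still has to be honest that writing the resulting $1+E''$ in the form $(1+o(1))^v$ requires $(1+f(x))^v-1$ to stay below $1$ (else the lower bound can be violated), which is again part of the uniformity issue you flag. So: right strategy, right worry, but the binomial-collapse step as written assumes more structure on the error than (26) actually delivers, and should be replaced by an absolute-value bound plus the $v/q=o(1)$ comparison.
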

Denote by $A_q$ the contribution of a power of $q$ in
$$
\frac{\tau_{z,z^2}(\mathrm{lcm}(p_1-1,p_2-1, \ldots ,p_v-1))}{\tau_{z,z^2}(p_1-1)\tau_{z,z^2}(p_2-1)\cdots \tau_{z,z^2}(p_v-1)}.
$$
Similarly, denote by $A_{q_1, \cdots, q_j}$ the contribution of powers of $q_1, \cdots, q_j$ in the above. Let
$$
B_{z,v}:=\frac{\tau_z(p_1-1)\tau_z(p_2-1)\cdots\tau_z(p_v-1)}{p_1p_2\cdots p_v}.
$$
We can combine the contributions of finite number of primes $q_1, \ldots , q_j$ in $(z,z^2]$. For these multiple primes, Lemma 5.2 becomes
\begin{lemma}
For any $0\leq k_1, \ldots , k_j \leq v$, as $x\rightarrow\infty$,
\begin{align*}
P(X_{q_1}=k_1, \ldots , X_{q_j}=k_j):&=\frac1{\mathfrak{S}_{v,z}(x)}\sum_{\substack{{\left(\frac{\log p_1}{\log x}, \ldots , \frac{\log p_v}{\log x}\right)\in \cup M_v}\\{\textrm{For each $s=1,\ldots , j,$}}\\{\textrm{ exactly }k_s \textrm{ primes }p_i\textrm{ satisfy }q_s|p_i-1}}} \frac{\tau_z(p_1-1)\tau_z(p_2-1)\cdots\tau_z(p_v-1)}{p_1p_2\cdots p_v}\\
&=\prod_{s\leq j} \binom v{k_s} \left(\frac2{q_s}\right)^{k_s} \left(1-\frac2{q_s}\right)^{v-k_s}(1+o(1))^v.
\end{align*}
Here, the functions implied in $1+o(1)$ only depend  on $j$, $x$ and they do not depend on $k_s$. 
\end{lemma}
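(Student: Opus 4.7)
The plan is to mimic the single-prime argument of Lemma 5.2 but with $\mathbf{u}$ permitted to have up to $j$ nontrivial coordinates, and to read off the product-of-binomials factorization across distinct $q_s$ from the resulting formula.

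First, I would parametrize the event by specifying which primes among $p_1,\ldots,p_v$ are divisible by each $q_s$: for each $s$, choose $S_s\subseteq\{1,\ldots,v\}$ with $|S_s|=k_s$, and impose ``$q_s\mid p_i-1$ iff $i\in S_s$''. The choices for different $s$ are independent, giving $\prod_{s=1}^{j}\binom{v}{k_s}$ configurations in total. For each fixed configuration I would then apply inclusion-exclusion in each coordinate $s$ separately to strip off the negations, replacing $S_s$ by $T_s\supseteq S_s$ with sign $(-1)^{|T_s|-|S_s|}$.

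Second, the ``positive'' event that $q_s\mid p_i-1$ for all $i\in T_s$ and all $s$ is exactly captured by $\mathfrak{S}_{\mathbf{u}(T),v,z}(x)$ with $u_i:=\prod_{s:\,i\in T_s}q_s$. Because $j$ is fixed, each $u_i$ is squarefree with $p(u_i)>z$, $u_i\leq z^{2j}\leq(\log x)^{A_1}$, and $\tau(u_i)\leq 2^j<A_1$ for suitable $A_1$, so Lemma 5.1(27) applies uniformly in $T$ and yields
$$\frac{\mathfrak{S}_{\mathbf{u}(T),v,z}(x)}{\mathfrak{S}_{v,z}(x)}=\prod_{i=1}^{v}\frac{\tau(u_i)}{u_i}\bigl(1+o(1)\bigr)^{|\cup_s T_s|}=\prod_{s=1}^{j}\left(\frac{2}{q_s}\right)^{|T_s|}\bigl(1+o(1)\bigr)^{|\cup_s T_s|},$$
after regrouping so that each $q_s$ contributes one factor $2/q_s$ per element of $T_s$. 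The alternating sums over $T_s$ now decouple across $s$: using $\sum_{l=0}^{v-k_s}\binom{v-k_s}{l}(-1)^l(2/q_s)^l=(1-2/q_s)^{v-k_s}$, each coordinate contributes $(2/q_s)^{k_s}(1-2/q_s)^{v-k_s}$ to the main term, and multiplying by the $\prod_s\binom{v}{k_s}$ choices of $(S_1,\ldots,S_j)$ gives the stated expression.

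The main obstacle will be pulling the factor $(1+o(1))^{|\cup_s T_s|}$ outside the signed inclusion-exclusion sum and absorbing it into a single uniform $(1+o(1))^v$. Since $|\cup_s T_s|\leq v$, and since the hypotheses of Lemma 5.1 depend on $\mathbf{u}(T)$ only through $\tau(u_i)$, $u_i$, and $p(u_i)$---all uniformly bounded over admissible $T$ once $j$ is fixed---the implied $o(1)$ function depends only on $x$ and $j$, not on the $T_s$'s or the $k_s$'s. This uniformity, which is precisely the ``independent of $k_s$'' clause in the conclusion, is what allows the error to be extracted before the cancellations in the alternating sum spoil the estimate; verifying this carefully is the only delicate point in the argument.
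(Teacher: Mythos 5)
Your proposal follows the same route the paper (implicitly) takes: the paper offers essentially no written proof of Lemma 5.3, asserting only that ``Lemma 5.2 becomes'' the multi-prime statement, and Lemma 5.2 in turn is stated as a direct consequence of Lemma 5.1 via inclusion–exclusion. Your write-up actually supplies more detail than the paper does, and the skeleton is right: parametrize the exact-divisibility event by the sets $S_s$, strip off the negations by M\"obius inversion over $T_s\supseteq S_s$, recognize the ``positive'' events as $\mathfrak{S}_{\mathbf{u}(T),v,z}(x)$ with $u_i=\prod_{s:\,i\in T_s}q_s$, and invoke Lemma 5.1 with a fixed $A_1>\max(2Aj,2^j)$ so the hypotheses $p(u_i)>z$, $u_i\leq z^{2j}<(\log x)^{A_1}$, $\tau(u_i)\leq 2^j<A_1$ hold uniformly over admissible $T$.

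You correctly flag the only delicate point: the $(1+o(1))^{|\cup_s T_s|}$ factor does not literally decouple across $s$, so one cannot simply move it across the alternating sums. A clean way to carry out the verification you defer is to pass to the coordinate map $\sigma(i)=\{s : i\in T_s\}$ (subject to $\sigma(i)\supseteq S(i):=\{s:i\in S_s\}$), under which the weight $\prod_s(-1)^{|T_s|}(2/q_s)^{|T_s|}(1+\epsilon)^{|\cup T_s|}$ factors as $\prod_i\bigl(\prod_{s\in\sigma(i)}(-2/q_s)\bigr)(1+\epsilon)^{[\sigma(i)\neq\emptyset]}$, hence the whole alternating sum factors over $i$. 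For $i\notin\cup_s S_s$ the local factor is $1+(1+\epsilon)\bigl(\prod_s(1-2/q_s)-1\bigr)=\prod_s(1-2/q_s)\cdot\bigl(1+O(\epsilon j/z)\bigr)$; for $i\in\cup_s S_s$ it is $(1+\epsilon)\prod_{s\in S(i)}(-2/q_s)\prod_{s\notin S(i)}(1-2/q_s)$. Multiplying over the $v$ indices and using $v/z\to0$ and $j$ fixed gives a total correction $(1+\epsilon)^{|\cup_s S_s|}\bigl(1+O(\epsilon jv/z)\bigr)=(1+o(1))^v$ uniformly in $(k_1,\ldots,k_j)$, which is exactly the claimed form. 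So your proposal is correct, matches the paper's (unstated) approach, and the gap you identify is real but closable by this factorization.
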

This shows that the random variables $X_{q_i}$ behave similar as independent binomial distributions. For $z<q\leq z^2$, we have $A_q=\frac 2{2^k}$ for $k\geq 1$, and $A_q=1$ for $k=0$. Thus, the contribution of this prime $q$ is
$$\mathbf{E}[A_q]=\left(2\left( 1-\frac 1q\right)^v - \left(1-\frac 2q\right)^v\right) (1+o(1))^v.
$$
For distinct primes $q_1, \ldots , q_j$ in $(z,z^2]$, the contribution of these primes is
$$\mathbf{E}[A_{q_1,\ldots,q_j}]=\prod_{s\leq j} \left(2\left( 1-\frac 1{q_s}\right)^v - \left(1-\frac 2{q_s}\right)^v\right)(1+o(1))^v,$$
where the function implied in $1+o(1)$ only depends on $j$, $x$.

Then, we conjecture that the contribution of all primes in $z<q\leq z^2$ will be
\begin{conj}
As $x\rightarrow\infty$, we have
$$
\mathfrak{U}_{v,z}(x)=\prod_{z<q\leq z^2} \left(2\left( 1-\frac 1{q}\right)^v - \left(1-\frac 2{q}\right)^v\right)\mathfrak{S}_{v,z}(x)(1+o(1))^v.
$$
\end{conj}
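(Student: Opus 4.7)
The plan is to interpret the ratio $\mathfrak{U}_{v,z}(x)/\mathfrak{S}_{v,z}(x)$ as the expectation $\mathbf{E}\bigl[\prod_{z<q\leq z^2}A_q\bigr]$ with respect to the probability measure on $v$-tuples $(p_1,\ldots,p_v)$ induced by the weights appearing in $\mathfrak{S}_{v,z}(x)$, and then to factor this expectation as $\prod_{z<q\leq z^2}\mathbf{E}[A_q]\cdot(1+o(1))^v$. This is morally an independence statement, and Lemma 5.3 is precisely its finite-dimensional marginal.

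First I would write $A_q=1+B_q$, where $B_q$ is supported on the event $\{X_q\geq 1\}$ and satisfies $|B_q|\leq 1$, and expand
$$\prod_{z<q\leq z^2}A_q=\sum_{S}\prod_{q\in S}B_q,\qquad \prod_{z<q\leq z^2}\mathbf{E}[A_q]=\sum_{S}\prod_{q\in S}\mathbf{E}[B_q],$$
where $S$ ranges over finite subsets of primes in $(z,z^2]$. For each $S$ of bounded cardinality, Lemma 5.3 yields $\mathbf{E}[\prod_{q\in S}B_q]=\prod_{q\in S}\mathbf{E}[B_q]\,(1+o(1))^v$, so truncating both expansions at $|S|\leq J$ matches the two sides up to an admissible error. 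For the tail $|S|>J$, one computes directly from the binomial formulas that $\mathbf{E}[B_q^2]\ll v^2/q^2$, and then hopes to run an $L^2$ argument bounding $\sum_{|S|>J}\mathbf{E}[\prod_{q\in S}B_q]$ by a power of $\sum_{z<q\leq z^2}v^2/q^2\ll v^2/(z\log z)$, which tends to $0$ for $J$ a sufficiently large absolute constant.

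The main obstacle, and the reason this strategy does not close the argument, is exactly the restriction flagged by the authors: Lemma 5.3 is ultimately underwritten by the Bombieri--Vinogradov type estimate of Lemma 2.3, which requires the modulus $u<\log^{A_1}x$ and $\tau(u)<A_1$. Since every prime $q$ appearing in the expansion satisfies $q>z=\sqrt{\log x}$, a subset $S$ with $|S|>2A_1$ already gives $u=\prod_{q\in S}q>\log^{A_1}x$, putting it outside the reach of Lemma 2.3. In particular, the cross-moments $\mathbf{E}[B_qB_{q'}]$ required for the $L^2$ bound are not uniformly accessible, and the tail of the expansion cannot be controlled by the methods currently in the paper. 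Proving the conjecture therefore seems to require a new arithmetic-progression estimate valid for moduli $u$ of size up to a small power of $x$, with control uniform in $\omega(u)$; this is the genuinely new ingredient that would be needed beyond the Bombieri--Fouvry--Iwaniec input applied here.
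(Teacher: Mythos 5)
This statement is a conjecture, and the paper does not prove it: Section~5 builds a heuristic out of Lemmas~5.1--5.3 and the computations of $\mathbf{E}[A_q]$ and $\mathbf{E}[A_{q_1,\ldots,q_j}]$, and then explicitly concedes in the closing remark that the restriction $u<(\log x)^{A_1}$, $\tau(u)<A_1$ inherited from Lemma~2.3 prevents extending Lemma~5.3 to all primes in $(z,z^2]$. Your proposal follows this same heuristic, and your diagnosis of the obstruction is exactly the one the authors give: since each $q>z=\sqrt{\log x}$, any $S$ with $|S|>2A_1$ produces a modulus $\prod_{q\in S}q>(\log x)^{A_1}$, outside the range accessible to the Bombieri--Vinogradov input of Lemma~2.3.

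Two technical points in your tail sketch should, however, be corrected. First, $B_q=A_q-1$ vanishes not only on $\{X_q=0\}$ but also on $\{X_q=1\}$, since there $A_q=2/2^{1}=1$; the support is $\{X_q\geq 2\}$, and it is this (not $\{X_q\geq 1\}$, whose probability is $\asymp v/q$) that actually yields the bound $\mathbf{E}[B_q^2]\ll v^2/q^2$. Second, with $v\asymp\sqrt{\log x/\log_2 x}$ and $z=\sqrt{\log x}$, one computes $\sum_{z<q\leq z^2}v^2/q^2\asymp v^2/(z\log z)\asymp\sqrt{\log x}/\log_2^2 x$, which tends to infinity, not to zero. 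It is $o(v)$, so it is compatible with the conjectured factor $(1+o(1))^v$, but your assertion that a bounded truncation level $J$ makes the tail vanish is wrong: an honest truncation would force $J\to\infty$ with $x$, and for such $J$ the moduli $\prod_{q\in S}q$ for $|S|\leq J$ already escape the range of Lemma~2.3. This only sharpens the conclusion you and the authors share, namely that a genuinely new equidistribution estimate for primes in progressions, with moduli of size a small power of $x$ and uniformity in $\omega(u)$, would be required to prove the conjecture.
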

It is clear that
$$
2\left( 1-\frac 1{q}\right)^v - \left(1-\frac 2{q}\right)^v= 1+o\left(\frac vq\right) .
$$
Thus, we have as $x\rightarrow\infty$,
$$
\prod_{z<q\leq z^2}\left(2\left( 1-\frac 1{q}\right)^v - \left(1-\frac 2{q}\right)^v\right)=(1+o(1))^v.$$
Therefore, we obtain the following heuristic result according to Conjecture 5.1.
\begin{conj}
As $x\rightarrow\infty$, we have
$$
\mathfrak{U}_{v,z}(x)=\mathfrak{S}_{v,z}(x)(1+o(1))^v.
$$
\end{conj}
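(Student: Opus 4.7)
The plan is to derive Conjecture 5.2 as a direct consequence of Conjecture 5.1 by showing that the Euler-type factor
$$
P(v,z) := \prod_{z<q\leq z^2}\left(2\bigl(1-\tfrac{1}{q}\bigr)^v - \bigl(1-\tfrac{2}{q}\bigr)^v\right)
$$
satisfies $P(v,z) = (1+o(1))^v$. Once this is done, Conjecture 5.1 immediately yields $\mathfrak{U}_{v,z}(x) = P(v,z)\,\mathfrak{S}_{v,z}(x)(1+o(1))^v = \mathfrak{S}_{v,z}(x)(1+o(1))^v$, which is Conjecture 5.2.

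First I would expand each local factor by the binomial theorem. The constant terms give $2-1 = 1$, and the linear-in-$1/q$ contributions cancel because $2\cdot v\cdot \tfrac{1}{q}$ equals $v\cdot \tfrac{2}{q}$. What remains is
$$
2\Bigl(1-\tfrac{1}{q}\Bigr)^v - \Bigl(1-\tfrac{2}{q}\Bigr)^v = 1 - \frac{v(v-1)}{q^2} + O\!\left(\frac{v^3}{q^3}\right) = 1 + O\!\left(\frac{v^2}{q^2}\right),
$$
where the expansion is uniform over $q\in(z,z^2]$ because $v/q \leq v/z \to 0$ under the choices $v = \lfloor c\sqrt{\log x/\log_2 x}\rfloor$ and $z=\sqrt{\log x}$. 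Taking logarithms and summing over the primes in the interval,
$$
\log P(v,z) = \sum_{z<q\leq z^2} O\!\left(\frac{v^2}{q^2}\right) = O\!\left(\frac{v^2}{z}\right),
$$
using the crude bound $\sum_{q>z} q^{-2} = O(1/z)$.

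Finally, I would verify $v^2/z = o(v)$, which is equivalent to $v/z\to 0$, and indeed $v/z = O(1/\sqrt{\log_2 x})$. Therefore $\log P(v,z) = o(v)$, so $P(v,z) = e^{o(v)} = (1+o(1))^v$, and Conjecture 5.2 follows from Conjecture 5.1. The main obstacle here is of course not this elementary product manipulation but Conjecture 5.1 itself: Lemma 5.3 establishes the independent-binomial model for \emph{any fixed number} $j$ of primes $q_1,\ldots,q_j\in(z,z^2]$, yet Conjecture 5.1 demands this independence propagated uniformly across the roughly $z/\log z$ primes in the interval, and controlling the accumulated $(1+o(1))^v$ error jointly over such a growing family lies outside the range $u<\log^{A_1} x$ permitted in Lemma 2.1 and its corollaries, which is precisely the difficulty flagged by the author.
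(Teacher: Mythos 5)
Your derivation of Conjecture 5.2 from Conjecture 5.1 is correct and follows the same route as the paper: estimate the local factor $2(1-1/q)^v-(1-2/q)^v$, sum the logarithms over $z<q\leq z^2$, and use $v/z\to 0$. Your binomial expansion is in fact slightly more precise than the paper's stated bound of $1+o(v/q)$, since it exhibits the cancellation of the linear terms and isolates the leading $O(v^2/q^2)$ correction, but this is a refinement of detail rather than a different method; you also correctly identify, as the paper does, that the real difficulty lies in Conjecture 5.1 itself and the range restriction $u<\log^{A_1}x$ in Lemma 2.1.
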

Then Conjecture 1.1 follows from Lemma 2.6. \\

\nid \bf Remarks. \rm \\

We were unable to prove Conjecture 1.1. The main difficulty is due to the short range of $u$ in Corollary 2.1. Because of the range of $u$, we could not extend Lemma 5.3 to all primes in $z<q\leq z^2$.
              \flushleft

\end{document}